\newtheorem{theor}{Theorem}
\newtheorem{propo}{Proposition}[section]
\newtheorem{lemma}[propo]{Lemma}
\newtheorem{defin}[propo]{Definition}
\newenvironment{proof}{\noindent{\scshape Proof.}}{\hspace{2mm} $\square$ \\}
\newcommand{\R}{\mathbb R}
\newcommand{\Z}{\mathbb Z}
\newcommand{\E}{\mathbb E}
\newcommand{\ep}{\epsilon}
\newcommand{\norm}[1]{|\!|#1|\!|}
\newcommand{\ind}{\hbox{{\small 1} \hspace*{-11pt} 1}}
\newcommand{\leb}{\frak L}
\DeclareMathOperator{\card}{card}
\DeclareMathOperator{\dist}{dist}
\begin{document}

\begin{frontmatter}

\title     {Two-scale multitype contact process: coexistence \\ in spatially explicit metapopulations}
\runtitle  {Two-scale multitype contact process}
\author    {Nicolas Lanchier}
\runauthor {Nicolas Lanchier}
\address   {School of Mathematical and Statistical Sciences, \\ Arizona State University, \\ Tempe, AZ 85287.}

\begin{abstract} \ \
 It is known that the limiting behavior of the contact process strongly depends upon the geometry of the graph on which
 particles evolve: while the contact process on the regular lattice exhibits only two phases, the process on homogeneous
 trees exhibits an intermediate phase of weak survival.
 Similarly, we prove that the geometry of the graph can drastically affect the limiting behavior of multitype versions
 of the contact process.
 Namely, while it is strongly believed (and partly proved) that the coexistence region of the multitype contact process
 on the regular lattice reduces to a subset of the phase diagram with Lebesgue measure zero, we prove that the coexistence
 region of the process on a graph including two levels of interaction has a positive Lebesgue measure.
 The relevance of this multiscale spatial stochastic process as a model of disease dynamics is also discussed.
\end{abstract}

\begin{keyword}[class=AMS]
\kwd[Primary ]{60K35; 82C22}
\end{keyword}

\begin{keyword}
\kwd{Interacting particle systems, multitype contact process, coexistence, multiscale argument, oriented percolation,
 metapopulation, disease dynamics, tumor cells, H1N1 influenza.}
\end{keyword}

\end{frontmatter}


\section{Introduction}
\label{Sec: introduction}

\indent The multitype contact process introduced in \cite{neuhauser_1992} is a continuous-time Markov process whose state space maps
 the $d$-dimensional integer lattice into the set $\{0, 1, 2 \}$ where state 0 refers to empty sites and where state $i$, $i = 1, 2$,
 refers to sites occupied by a type $i$ particle.
 Denoting by $\eta_t$ the state of the system at time $t$ and by $\sim$ the binary relation indicating that two vertices are nearest
 neighbors, the evolution of the process at vertex $x$ is described by the transition rates
 $$ \begin{array}{rclrcl}
   c_{0 \,\to \,1} (x, \eta) & = & \displaystyle B_1 \ \sum_{x \sim y} \ \ind \{\eta (y) = 1 \} \hspace*{20pt} &
   c_{1 \,\to \,0} (x, \eta) & = & \delta_1 \vspace{4pt} \\
   c_{0 \,\to \,2} (x, \eta) & = & \displaystyle B_2 \ \sum_{x \sim y} \ \ind \{\eta (y) = 2 \} \hspace*{20pt} &
   c_{2 \,\to \,0} (x, \eta) & = & \delta_2 \end{array} $$
 where $c_{i \,\to \,j} (x, \eta)$ is the rate at which the state of $x$ flips from $i$ to $j$.
 That is, type $i$ particles give birth through the edges of the lattice to particles of their own type at rate $B_i$ and
 die spontaneously at death rate $\delta_i$.
 If an offspring is sent to a site already occupied, the birth is suppressed.

\indent The multitype contact process has been introduced and completely studied when the death rates are equal
 by Neuhauser \cite{neuhauser_1992}.
 To fix the time scale, assume that $\delta_1 = \delta_2 = 1$, and to leave out trivialities, assume in addition that the
 birth rates are greater than the critical value of the basic contact process \cite{harris_1974}.
 Then, the type with the highest birth rate outcompetes the other type.
 In the neutral case when the birth rates are equal, the process clusters in dimension $d \leq 2$ while coexistence occurs
 in dimension $d \geq 3$.
 Here and after, coexistence means strong coexistence: there exists a stationary distribution with a positive density of
 type 1 and type 2.
 The long-term behavior of the process when the death rates are different remains an open problem but Neuhauser conjectured
 that her results extend to the general case provided one replaces the birth rate by the ratio of the birth rate to the
 death rate.
 In particular, it is believed that the coexistence region as a subset of the space of the parameters has Lebesgue
 measure zero.
 The existence of such a coexistence region is mathematically interesting but for obvious reasons it is irrelevant to
 explain why species coexist in nature.
 In order to identify mechanisms (more meaningful for biologists) that promote coexistence, recent studies have
 focused on modifications of the multitype contact process in which the coexistence region contains an open set of the parameters.
 It has been proved in different contexts that coexistence is promoted by
 spatial \cite{chan_durrett_2006, durrett_lanchier_2008, lanchier_neuhauser_2006}
 and temporal \cite{chan_durrett_lanchier_2009} heterogeneities.
 This article introduces the first example of a multitype contact process in which coexistence is produced by the geometry of
 the graph on which particles evolve.
 In some sense, our main result is analogous to the one of Pemantle \cite{pemantle_1992} which states that, in contrast with
 the contact process on the regular lattice, the contact process on homogeneous trees exhibits a phase of weak survival.
 In both cases, the geometry of the graph is responsible for creating new qualitative behaviors.

\indent To construct our process, we consider the $d$-dimensional lattice $\Z^d$ as a homogeneous graph with
 degree $2d$ where vertices are connected to each of their $2d$ nearest neighbors.
 Let $N$ be an odd positive integer.
 Then, we consider the following collection of hyperplanes:
 $$ H (i, j) \ = \ \{x \in \R^d : x^i = N / 2 + j N \} \qquad \hbox{for} \quad
    i = 1, 2, \ldots, d \quad \hbox{and} \quad j \in \Z $$
 where $x^i$ denotes the $i$th coordinate of $x$, and remove from the original graph all the edges that intersect one of
 these hyperplanes.
 This induces a partition of the lattice into $d$-dimensional cubes with length edge $N$ that we call \emph{patches}.
 See the left-hand side of Figure \ref{Fig: graph} for an illustration where edges drawn in dotted lines are the edges
 to be removed.
 Since the parameter $N$ is odd, each patch has a central vertex.
 To complete the construction, we draw a long edge between the centers of adjacent patches as indicated in the right-hand
 side of Figure \ref{Fig: graph}.
 The resulting graph can be seen as the superposition of two lattices that we call \emph{microscopic}
 and \emph{mesoscopic} lattices.
 Even though, for more convenience, we will prove all our results for this particular graph, our main coexistence result can
 be easily extended to more general graphs that we shall call \emph{two-scale graphs}.
 These graphs are described in details at the end of this section.

\indent To formulate the evolution rules, we write $x \sim y$ to indicate that vertices $x$ and $y$ are connected by a
 short edge, and $x \leftrightarrow y$ to indicate that both vertices are connected by a long edge.
 The evolution at vertex $x$ is then given by the following transition rates:
 $$ \begin{array}{rclrcl}
    c_{0 \,\to \,1} (x, \eta) & = & \displaystyle
       B_1 \ \sum_{x \leftrightarrow y} \ \ind \{\eta (y) = 1 \} \ + \ \beta_1 \ \sum_{x \sim y} \ \ind \{\eta (y) = 1 \} \hspace*{20pt} &
    c_{1 \,\to \,0} (x, \eta) & = & \delta_1 \vspace{4pt} \\
    c_{0 \,\to \,2} (x, \eta) & = & \displaystyle
       B_2 \ \sum_{x \leftrightarrow y} \ \ind \{\eta (y) = 2 \} \ + \ \beta_2 \ \sum_{x \sim y} \ \ind \{\eta (y) = 2 \} \hspace*{20pt} &
    c_{2 \,\to \,0} (x, \eta) & = & \delta_2. \end {array} $$
 Note that in the expression of $c_{0 \,\to \,i} (x, \eta)$ the first sum is empty whenever vertex $x$ is not located at
 the center of a patch.
 We call this process the \emph{two-scale multitype contact process}.
 Also, we call the parameters $\beta_i$ and $B_i$ the \emph{microscopic} and \emph{mesoscopic} birth rates.
 The one-color version of this process has been introduced by Belhadji and Lanchier \cite{belhadji_lanchier_2008} as a spatially
 explicit model of metapopulation \cite{hanski_1999, levins_1969}.
 The objective there was to determine parameter values for which survival occurs.
 In contrast, the emphasis here is on whether both types coexist or one type outcompetes the other type.
 While our analysis of the single-species model in \cite{belhadji_lanchier_2008} did not reveal any major difference between the
 contact processes on regular lattices and the graph of Figure \ref{Fig: graph}, our analysis of the
 multispecies model shows that two-scale graphs, as opposed to regular lattices, promote the coexistence of the species.
 From now on, we assume that the parameters are chosen in such a way that each type survives in the absence of the other one
 and refer to \cite{belhadji_lanchier_2008} for explicit conditions of survival.
 Finally, note that when $N = 1$, i.e., patches reduce to a single vertex, the values of the microscopic birth rates are
 irrelevant and the two-scale multitype contact process reduces to the multitype contact process. 
 Therefore, to avoid trivialities, we also assume that $N \neq 1$.

\begin{figure}[t]
\centering
\mbox{\subfigure{\epsfig{figure = 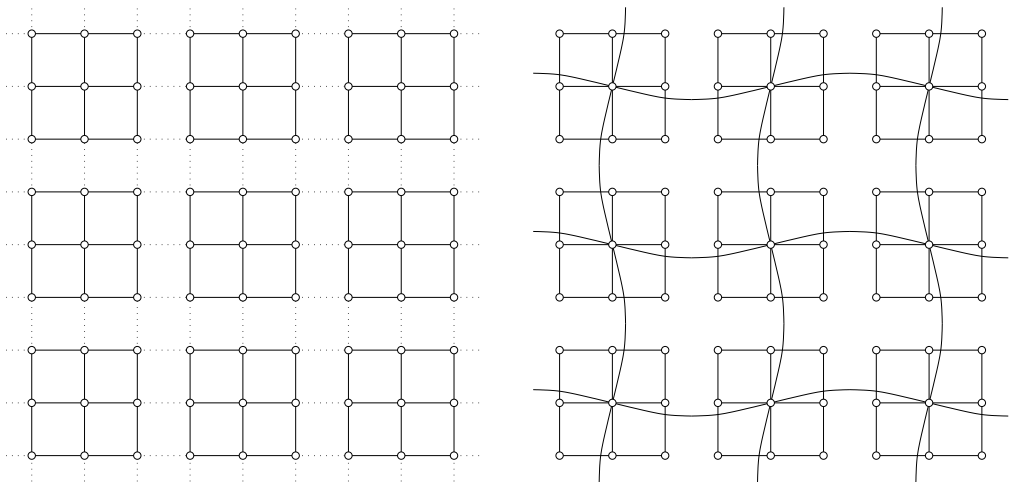, width = 380pt}}}
\caption{\upshape{Construction of the two-scale graph when $N = 3$ and $d = 2$.}}
\label{Fig: graph}
\end{figure}

\indent We call the best \emph{invader} the type with the highest $B_i$ to $\delta_i$ ratio, and the best \emph{competitor}
 the type with the highest $\beta_i$ to $\delta_i$ ratio.
 Our first theorem extends Neuhauser's result to the two-scale multitype contact process: assuming that the death
 rates are equal, when one type is both the best invader and the best competitor, it outcompetes the other type except in
 the neutral case when the process clusters in $d \leq 2$ and coexistence occurs in $d \geq 3$.
 See Figure \ref{Fig: M2CP} for pictures of numerical simulations in the two dimensional neutral case.

\begin{theor}[$\delta_1 = \delta_2 = 1$ and $N \neq 1$]
\label{duality}
 Assume that $B_1 \leq B_2$ and $\beta_1 \leq \beta_2$.
\begin{enumerate}
 \item In the neutral case $B_1 = B_2$ and $\beta_1 = \beta_2$ we have the following alternative.
\begin{enumerate}
\item In $d \leq 2$ clustering occurs, i.e., for any initial configuration,
 $$ \lim_{t \to \infty} P \,(\eta_t (x) = 1 \ \hbox{and} \ \eta_t (y) = 2) \ = \ 0 \quad \hbox{for all} \ x, y \in \Z^d. $$
\item In $d \geq 3$ coexistence occurs, i.e., there exists a stationary distribution under which the density of type 1 and the
 density of type 2 are both positive.
\end{enumerate}
 \item If $B_1 \leq B_2$ and $\beta_1 \leq \beta_2$ with at least one strict inequality then type 2 wins.
\end{enumerate}
\end{theor}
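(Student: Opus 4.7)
\textbf{Proof plan for Theorem \ref{duality}.} The plan is to follow Neuhauser's duality-based approach \cite{neuhauser_1992} adapted to our two-scale graph. First construct the process via a Harris graphical representation: to each short edge attach independent Poisson processes of rates $\beta_1$ and $\beta_2$ delivering type-$1$ and type-$2$ birth arrows, to each long edge between patch centers attach analogous Poisson processes of rates $B_1$ and $B_2$, and at each vertex place independent Poisson death marks of rate $\delta_i = 1$. Running backwards in time from a space-time point $(x,t)$ defines the \emph{ancestor random walk}, which jumps along the birth arrows it encounters and is killed at the first death mark. In the neutral case $B_1 = B_2$ and $\beta_1 = \beta_2$ the graphical construction is type-blind, so $\eta_t(x)$ equals the initial type of the endpoint of the ancestor walk at time $0$, whenever that walk has not yet been killed.

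For part (1)(a), I would run two ancestor walks backward from $x$ and $y$. They evolve as independent continuous-time random walks on the two-scale graph coalescing on contact. The mesoscopic projection (the patch index of the walker) is a symmetric random walk on $\Z^d$: long-edge jumps change the patch by one nearest neighbor step, while inside a patch the walker performs a symmetric short-edge walk until either dying, leaving the patch, or reaching the center to use a long edge. Since symmetric random walks on $\Z^d$ are recurrent for $d \leq 2$, the two mesoscopic projections share a patch infinitely often on the event of joint survival, and a local coupling within a patch shows that two ancestor walks sharing a patch meet in finite time with probability bounded below. Hence the two walks coalesce almost surely, which forces $\eta_t(x) = \eta_t(y)$ on the survival event and yields clustering.

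For part (1)(b), start from the translation-invariant product measure assigning types $1$ and $2$ with equal density $\rho \in (0,1/2)$ and empty sites otherwise. By transience of the mesoscopic projection in $d \geq 3$ together with survival of the single-type two-scale contact process \cite{belhadji_lanchier_2008}, the ancestor walks from two distinct vertices survive forever and never meet with positive probability. On that event their endpoints at time $0$ are asymptotically independent with the product marginal law, so $P(\eta_t(x) = 1,\, \eta_t(y) = 2)$ is bounded below uniformly in $t$. A weak limit of Ces\`aro averages of the law of $\eta_t$ then produces a stationary distribution under which both densities are positive.

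For part (2) the neutral duality collapses and I would argue by coupling. The idea is to build $\eta_t$ on the same probability space as a process that uses only the larger type-$2$ arrows of rates $B_2$ and $\beta_2$, and to exploit the strict inequality to show that the additional type-$2$ arrows present in the dominating dynamics steadily invade every type-$1$ region while type $2$ survives globally by our standing assumption. I expect this to be the main technical obstacle: the multitype contact process is not attractive in the classical sense, so the coupling must be constructed carefully — in the spirit of \cite{neuhauser_1992} — to ensure that the marginal dynamics of type $2$ dominates a supercritical one-type contact process regardless of type-$1$ activity. Once that domination is in place, an ergodic argument forces the density of type $1$ to zero.
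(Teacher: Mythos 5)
Your dual is the wrong object: you describe a single ``ancestor random walk'' that jumps along the birth arrows it encounters and conclude that $\eta_t(x)$ equals the initial type at the walk's endpoint. That is the voter-model dual, not the contact-process dual. Because births onto occupied sites are suppressed, running the graphical representation backward from $(x,T)$ produces a \emph{branching} tree of potential ancestors, and the type of $(x,T)$ is determined by the first ancestor in the hierarchy that lands on an \emph{occupied} site at time $0$; if the first ancestor lands on an empty site one must pass to the second ancestor, and so on. So even in the neutral case your identity fails, and the coalescence/recurrence argument for $d\le 2$ cannot be run on a pair of independent killed walks. The correct substitute is Neuhauser's renewal structure: the path of the first ancestor (defined through the labeling of the tree) is cut at renewal points --- times when it jumps to a space-time point whose dual lives forever --- into independent pieces, and it is the sequence of renewal points, not the ancestor path itself, that performs a random walk.

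Second, and this is the step that is actually new in this paper, on the two-scale graph the increments between consecutive renewal points are \emph{not} identically distributed, because the graph is not vertex-transitive: the first-ancestor path drifts toward patch centers and behaves differently near patch boundaries. The paper's fix is to extract the subsequence of renewal points landing on $N\Z^d$; by invariance of the graph under translations by vectors of $N\Z^d$, this subsequence is a genuine random walk with exponentially bounded space-time increments (Proposition \ref{random_walk}), after which the techniques of Sections 2--5 of Neuhauser's paper deliver all three parts of the theorem. Your ``mesoscopic projection'' gestures at this idea but is applied to the wrong object, and no argument is given that the projection has i.i.d.\ increments. Finally, for part (2) you explicitly defer the key step (``I expect this to be the main technical obstacle''); in the paper this case requires no separate coupling or attractiveness argument, since the graphical representation already encodes the advantage of type $2$ through the extra arrows of rates $B_2-B_1$ and $\beta_2-\beta_1$ that are forbidden for type $1$, and the conclusion again follows from the renewal/random-walk structure. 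As written, the proposal does not constitute a proof.
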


\begin{figure}[t]
\centering
\mbox{\subfigure{\epsfig{figure = 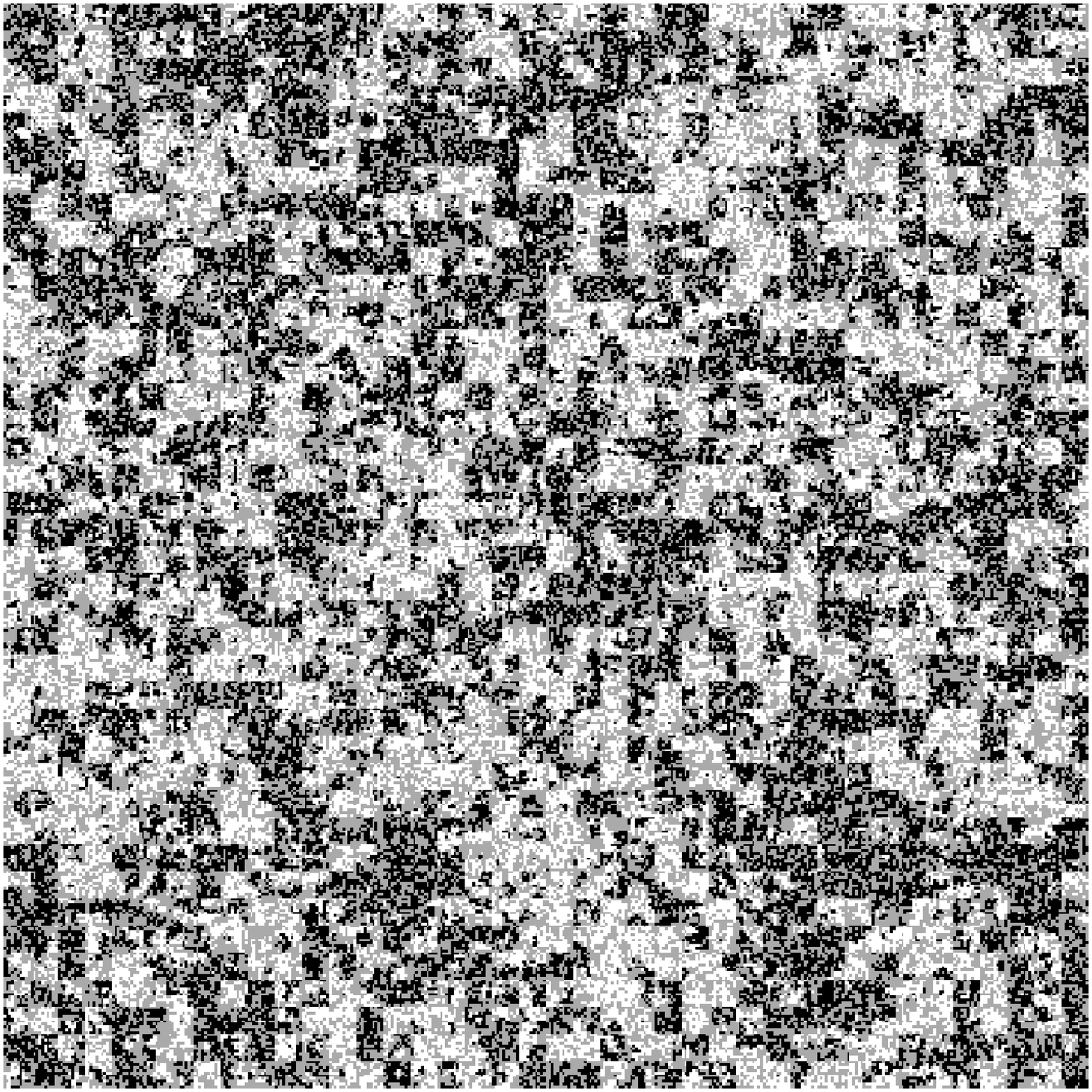, width = 200pt, height = 200pt}}} \hspace{10pt}
\mbox{\subfigure{\epsfig{figure = 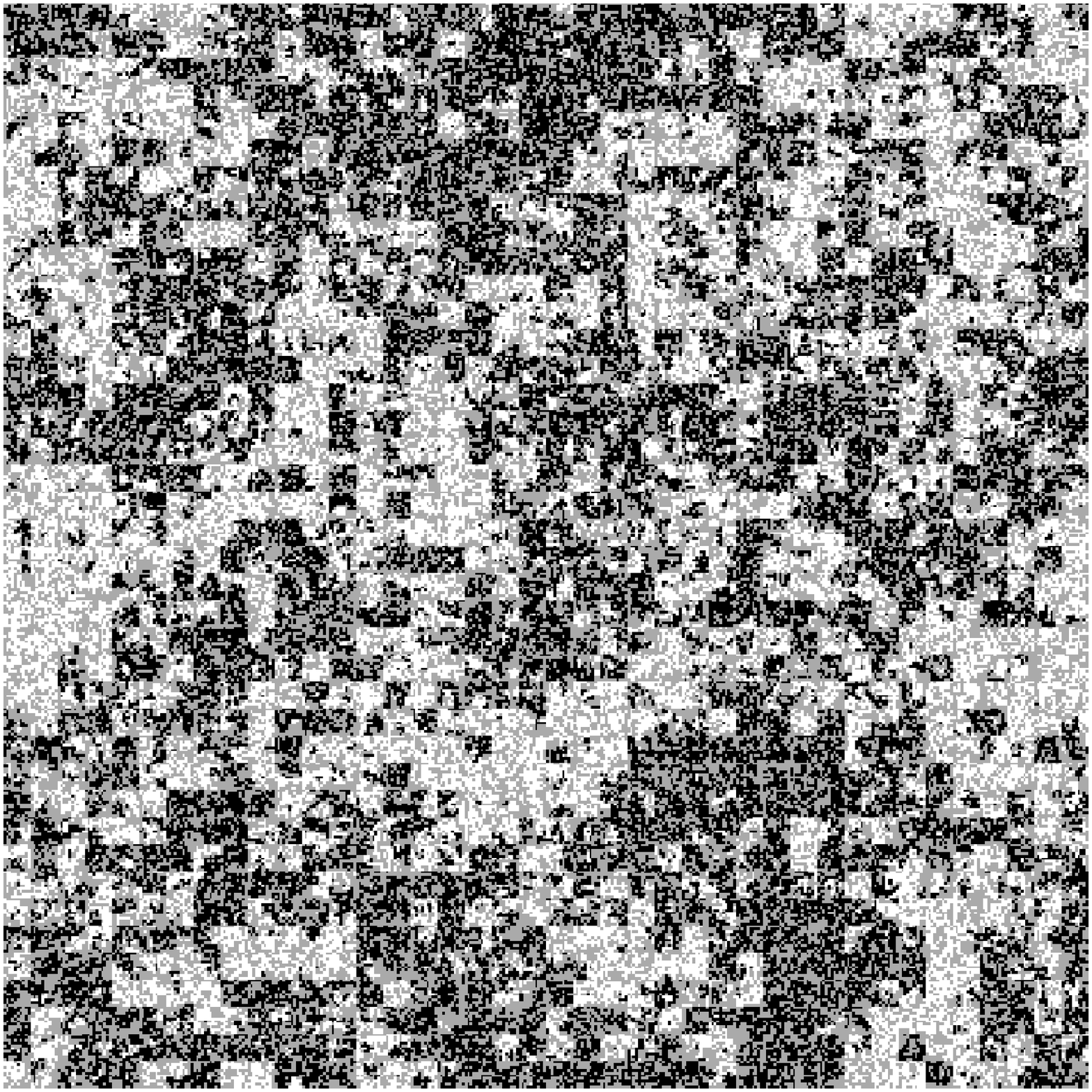, width = 200pt, height = 200pt}}}
\caption{\upshape Snapshots of the multitype two-scale contact process on the $400 \times 400$ lattice with periodic boundary
 conditions at times 100 and 500, respectively.
 White and black vertices refer to species 1 and 2, respectively, and grey vertices to empty sites.
 The parameters are $\beta_i = 3$ and $B_i = \delta_i = 1$ for $i = 1, 2$.}
\label{Fig: M2CP}
\end{figure}

\noindent To search for strategies promoting coexistence, we now assume that one type, say type 1, is the best invader,
 and the other type is the best competitor, in which case the limiting behavior of the process is more difficult
 to predict.
 Interestingly, while the results of Theorem \ref{duality} are not sensitive to the patch size, the long-term behavior of
 the process under these new assumptions strongly depends upon the parameter $N$.
 As mentioned above, when patches reduce to a single vertex, the values of the microscopic birth rates are irrelevant so
 that type 1 particles outcompete type 2 particles as predicted by Theorem 1 in \cite{neuhauser_1992}.
 In contrast, taking $N$ large leaves enough room for type 2 to outcompete locally type 1 within each patch, except maybe
 near the central vertices located on the mesoscopic lattice.
 The time a colony of type 2 particles persists within a single patch is long enough so that survival of type 2 is insured
 by casual migrations from one patch to another, which is referred to as the \emph{rescue effect} in metapopulation theory.
 In conclusion,
 large patches promote survival of the best competitor, as indicated in the following theorem.

\begin{theor}[$\delta_1 = \delta_2 = 1$]
\label{survival}
 Assume that $B_1 \geq B_2 > 0$ and $\beta_2 > \beta_1 > \beta_c$.
 Then, in any dimension, type 2 survives provided the spatial scale $N$ is sufficiently large.
\end{theor}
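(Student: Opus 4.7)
The plan is to prove survival via a multiscale block construction in which the mesoscopic lattice $\Z^d$ plays the role of the renormalized lattice. Fix parameters $\rho \in (0,1)$, $T > 0$, and a radius $r$ with $v(\beta_1)\,T < r < N/2$, where $v(\beta_1) < \infty$ is the asymptotic propagation speed of the supercritical contact process with birth rate $\beta_1$. Say that a patch $Q_x$ is \emph{good} at time $s$ if the annular shell $A_x = \{y \in Q_x : r \leq \dist(y,x) < N/2\}$ contains at least $\rho\,\card(A_x)$ sites in state $2$. I will show that, for $N$ sufficiently large, if $Q_x$ is good at time $0$ then, with probability at least $1 - \ep$ and uniformly in the configuration outside $Q_x$, every mesoscopically adjacent patch $Q_y$ is good at time $T$. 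Standard comparison with supercritical $1$-dependent oriented percolation on $\Z^d$ then yields an infinite cluster of good patches with positive probability, and hence survival of type $2$.

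The block estimate decomposes into \emph{retention} (the patch $Q_x$ itself stays good) and \emph{spread} (each mesoscopically adjacent patch becomes good). Retention is the heart of the argument. Because $Q_x$ touches the rest of the graph only at its central vertex $x$, every type-$1$ particle present in $Q_x$ during $[0,T]$ is either initial or descended from a long-edge birth at $x$, the latter occurring at total rate at most $2dB_1$. By a monotone coupling, the type-$1$ occupancy of $Q_x$ is stochastically dominated by a pure contact process of birth rate $\beta_1$ with immigration at $x$ at rate $2dB_1$, and standard shape and speed estimates for the supercritical contact process show that, with probability at least $1 - \ep$, all sites ever occupied by this dominating process lie in the ball $B(x, r)$ throughout $[0, T]$. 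On this event the dynamics restricted to the shell $A_x$ involves no type $1$ and reduces to a single-species contact process with rate $\beta_2 > \beta_c$; a coupling with its upper invariant measure then gives the required lower bound $\rho$ on the type-$2$ density in $A_x$ at time $T$, provided $N$ is large.

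For the spread step, the microscopic competitive advantage $\beta_2 > \beta_1$ lets type $2$ invade inward into $B(x, r)$ from the shell. Using a quantitative finite-$N$ version of Neuhauser's result \cite{neuhauser_1992}, one shows that the central vertex $x$ is in state $2$ during a subinterval of $[0, T]$ of positive Lebesgue measure with probability close to one. During these windows, type-$2$ long-edge births from $x$ to each adjacent center $y$ occur at rate $B_2 > 0$, so for $T$ sufficiently large at least one such birth succeeds with probability at least $1 - \ep$. From this seed, the supercritical microscopic type-$2$ contact process on $Q_y$ fills $A_y$ up to density $\rho$ with high probability by another standard block estimate. The main obstacle is the retention estimate, which must tame an arbitrarily large invasion rate $B_1$; the key geometric fact is that all type-$1$ invasions enter $Q_x$ through a single vertex, so choosing $N$ much larger than $v(\beta_1)\,T$ confines them to a small neighborhood of the center and lets the competitive advantage $\beta_2 > \beta_1$ control the rest of the patch.
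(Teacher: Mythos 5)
Your overall architecture --- renormalize at the patch scale, split the block estimate into retention and spread, and compare with $1$-dependent oriented percolation on $\Z^d$ --- is the same as the paper's. But the retention step, which you correctly identify as the heart of the matter, does not work as written, and the mechanism you propose for it is the wrong one. The claim that the type-$1$ occupation of $Q_x$ is dominated by a contact process with birth rate $\beta_1$ and immigration at $x$, and hence confined to $B(x,r)$ with $r>v(\beta_1)T$, fails for two reasons. First, your definition of a good patch only asks for a density $\rho$ of $2$'s in the shell $A_x$; it does not exclude $1$'s from the shell or from the bulk of $Q_x$, so the dominating process does not start from the single vertex $x$ and the speed bound gives no confinement. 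Second, and more fundamentally, the construction must be iterated over infinitely many blocks: since $\beta_1>\beta_c$, the dominating single-type contact process reseeded at $x$ at rate $2dB_1$ survives and spreads linearly, so its occupied region grows by roughly $v(\beta_1)T$ per block and fills the patch after of order $N/(v(\beta_1)T)$ iterations. A domination that ignores the $2$'s entirely cannot see the only force capable of stopping the $1$'s, namely the competitive advantage $\beta_2>\beta_1$. There is also a quantitative tension you cannot resolve within this scheme: the time $T$ per block must be long enough for a $2$ to seed the adjacent center (which is occupied by $1$'s most of the time, since $B_1$ may be huge) \emph{and} for the new colony to establish, which forces $T$ to grow with $N$, while your confinement requires $v(\beta_1)T<N/2$.

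What the paper actually does in place of confinement is to prove that the $2$'s eliminate the $1$'s everywhere in the patch outside a \emph{fixed} neighborhood $B_*$ of the center, uniformly in time, via a second, finer block construction inside the patch (good $L$-cubes, Definition \ref{good} and Proposition \ref{coupling_1}) whose key estimate, Lemma \ref{invasion}, is proved with the dual process, the ancestor hierarchy, and a repositioning/selected-path argument routing a dual path forbidden for the $1$'s into a region dense in $2$'s. This is where $\beta_2>\beta_1$ actually enters, and it is the main technical content of the proof; your proposal replaces it with a speed bound that does not use $\beta_2$ at all. Similarly, the occupation-time statement for the center (your ``quantitative finite-$N$ version of Neuhauser's result'') is not off the shelf: the center is bombarded by $1$'s at an arbitrarily large rate $2dB_1$, and the paper derives the needed lower bound on the time vertex $0$ spends in state $2$ (Lemmas \ref{center} and \ref{occupation}) from the internal percolation structure, on an exponentially long time scale $\exp(cK)$ rather than a fixed $T$. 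So the skeleton of your argument is right, but the two lemmas you treat as routine are precisely the ones that carry the proof, and the substitute you offer for the first of them is not repairable.
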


\noindent Theorem \ref{survival} is the key result to identify a set of parameters for which coexistence occurs.
 First of all, we fix the parameter values to make type 1 a good invader but a bad competitor living at a slow time scale.
 Type 1 then survives by jumping from patch to patch, or equivalently by invading the mesoscopic lattice.
 This holds regardless of the patch size.
 Coexistence of both types then follows from the proof of Theorem \ref{survival} by fixing the remaining parameters to make
 type 2 a good competitor living at a much faster time scale than type 1 and taking $N$ large (which does not affect
 survival of type 2).
 To prove rigorously survival of both types, the two-scale multitype contact process will be simultaneously coupled with
 two different oriented percolation processes, one following the evolution of type 1 at a certain time scale, the other
 one following the evolution of type 2 at a slower time scale.
 The use of a block construction implies the existence of an open set of the parameters in which coexistence occurs, so
 we can conclude that

\begin{theor}
\label{coexistence}
 In any dimension, the Lebesgue measure of the coexistence region is strictly positive provided the spatial scale $N$
 is sufficiently large.
\end{theor}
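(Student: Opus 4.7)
The plan is to combine the survival mechanism for type 2 furnished by Theorem \ref{survival} with a second, independent survival mechanism for type 1 living on the mesoscopic sublattice of patch centers, arranging parameters so that the two dynamics operate on well-separated time scales and therefore do not destructively interfere. Concretely, I fix $\delta_1 = 1$ with $B_1$ chosen well above the critical value of the basic contact process on $\Z^d$, and then pick $\beta_2 > \beta_1 > \beta_c$ with $\beta_2$ and $\delta_2$ both large, so that the microscopic dynamics of type 2 equilibrates on a time scale orders of magnitude shorter than that of type 1. Finally, I take $N$ large enough that Theorem \ref{survival} applies in this parameter regime.

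For type 2, the key observation is that on any finite microscopic time window over which the block construction underlying Theorem \ref{survival} operates within a single patch, the surrounding type 1 configuration is essentially frozen: type 1 experiences at most $O(1)$ flips over a window during which type 2 performs $\Theta(\beta_2)$ events. The argument of Theorem \ref{survival} therefore still produces, after a minor adjustment of constants, a coupling of the type 2 population with a supercritical oriented percolation. For type 1, I build a second block construction on the sublattice of patch centers. Long edges join only centers, so the restriction of type 1 to the mesoscopic lattice behaves like a contact process on $\Z^d$ with birth rate $B_1$, perturbed only by the possibility that the center is currently occupied by type 2. A central vertex has only $2d$ short neighbors, so the rate at which type 2 can invade it is bounded by $2d\,\beta_2$, while type 1 birth attempts from the neighboring centers arrive at rate $\Theta(B_1)$; choosing $B_1$ sufficiently large makes each center occupied by type 1 a uniformly positive fraction of time, which is enough to compare the mesoscopic dynamics of type 1 with a supercritical oriented percolation.

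Simultaneous survival of both types yields a stationary distribution charging each type with positive density, which is coexistence. The use of finite space-time block constructions makes both comparisons robust: the corresponding percolation parameters depend continuously on $(B_1, B_2, \beta_1, \beta_2, \delta_1, \delta_2)$, so the strict supercriticality inequalities persist on an open neighborhood of the chosen parameter values, a neighborhood of positive Lebesgue measure. The main obstacle will be the compatibility of the two couplings, since they describe the same process on the same graph: the type 1 block construction must remain valid even though type 2 dominates inside each patch, and the type 2 block construction must tolerate nontrivial type 1 activity near the patch centers. Both compatibilities should follow from the time-scale separation $\beta_2, \delta_2 \gg B_1, \delta_1$ together with the choice $N \gg 1$, which is precisely what the parameter selection above provides.
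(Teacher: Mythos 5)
Your overall architecture --- two simultaneous block constructions at the patch scale, one per type, run at well-separated time scales, followed by a perturbation of the finite block events to get an open coexistence region --- is the same as the paper's. The gap is in the parameter regime you choose and in the claim that Theorem \ref{survival} applies there ``after a minor adjustment of constants.'' Theorem \ref{survival} and the entire within-patch analysis behind it (Lemma \ref{invasion}, Proposition \ref{coupling_1}) are proved for $\delta_1 = \delta_2 = 1$: the graphical representations of Tables \ref{Tab-1} and \ref{Tab-2} use a \emph{common} death process for both types plus $2$-arrows at rate $\beta_2 - \beta_1$ forbidden for the $1$'s, and the duality/repositioning argument that makes type $2$ win inside a patch lives or dies with that coupling. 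Once you set $\delta_2 \gg \delta_1 = 1$ while keeping $\beta_1 > \beta_c$, the within-patch dynamics is a genuine two-type contact process with unequal death rates, which is precisely the setting in which Neuhauser's techniques are not known to work; your ``type 1 is frozen'' heuristic is plausible but is a different argument that would have to be built from scratch, not a constant-chasing exercise. Relatedly, with $\delta_2$ large the condition for type $2$ even to survive alone in a patch is on the ratio $\beta_2 / \delta_2$, which your stated hypotheses ($\beta_2 > \beta_1 > \beta_c$, $\delta_2$ large) do not control.

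The second problem is the contested patch centers. You need the centers simultaneously (i) empty often enough that type $1$ birth arrows from adjacent centers land successfully, and (ii) occupied by type $2$ a positive fraction of time so that type $2$ can migrate between patches. Your two prescriptions --- $\beta_2, \delta_2 \gg B_1$ for time-scale separation, and $B_1$ ``sufficiently large'' so that type $1$ holds the centers --- pull in opposite directions, and you never quantify either occupation fraction. The paper's resolution is to take $\beta_1 = 0$ and $\delta_1$ small instead of $B_1$ large: with $\beta_1 = 0$ all $1$'s off the mesoscopic lattice die quickly, so the patch interiors reduce to a one-type contact process for the $2$'s (no unequal-death-rate competition to analyze); the pressure of type $1$ on a center is bounded by the fixed rate $2 d B_1$, so Lemmas \ref{center} and \ref{occupation} still give the center to type $2$ a fraction $K^{-1}$ of the time; and the bound $\Theta_2 = 2d (B_2 + \beta_2) / (1 + 2d (B_2 + \beta_2)) < 1$ on the occupation of $N e_1$ by $2$'s, combined with $\mathcal J$ large and then $\delta_1$ small, gives type $1$ its successful mesoscopic births. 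If you want to keep your regime you must supply (a) a proof that type $2$ beats a $\beta_1 > \beta_c$ competitor with a different death rate inside a patch, and (b) quantitative occupation-time bounds at the centers replacing Lemmas \ref{center}--\ref{occupation} and the $\Theta_2$ computation; neither is in your sketch.
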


\noindent As previously mentioned, Theorem \ref{coexistence} together with Neuhauser's conjecture \cite{neuhauser_1992}
 indicates that, in contrast to the regular lattice, the graph of Figure \ref{Fig: graph} promotes coexistence for the
 multitype contact process.
 Thinking of the competitive exclusion principle in ecology (the number of coexisting species at equilibrium cannot
 exceed the number of resources), this suggests that the two-scale graph provides two spatial resources, namely the
 microscopic and the mesoscopic lattices, which allows two types to coexist.
 Note also that, while the coexistence region of the Neuhauser's competing model corresponds to the neutral case, for
 the two-scale multitype contact process, the coexistence region contains cases in which type 1 and type 2 have opposite
 strategies, namely one type is a good invader exploiting the mesoscopic lattice, while the other type is a good
 competitor using the microscopic lattice as its primary resource, and both types live at different time scales.

\indent Finally, even through for simplicity we will prove Theorems \ref{survival} and \ref{coexistence} only for the
 two-scale graph depicted in Figure \ref{Fig: graph}, we would like to point out that our proofs easily extend to more
 general graphs, which also gives rise to realistic stochastic spatial models of disease dynamics.
 We first describe the general mathematical framework in which our results can be extended and then discuss about the
 relevance of this framework from a biological point of view.
 To begin with, let
 $$ H_1 = (V_1, E_1) \ \ \hbox{and} \ \ H_2 = (V_2, E_2) \quad \hbox{with} \ \
    V_1 \supset V_2 \ \ \hbox{and} \ \ E_1 \cap E_2 = \varnothing $$
 be two infinite graphs.
 We call $H_1$ the microscopic graph and $H_2$ the mesoscopic graph, and consider the two-scale contact process
 evolving on the graph $G = (V_1, E_1 \cup E_2)$ where species $i$ gives birth through the edges of the microscopic graph
 at rate $\beta_i$ and through the edges of the mesoscopic graph at rate $B_i$.
 Assume that we have the following property that we call \emph{separation of the space scales}.
 The mesoscopic graph $H_2$ contains a self-avoiding path $\{X_j : j \in \Z \} \subset V_2$ such that
\begin{enumerate}
 \item For all $j \in \Z$, $H_1$ contains a self-avoiding path of length at least $N$ containing $X_j$. \vspace{4pt}
 \item For all $x, y \in V_2$, the shortest path in $H_1$ connecting $x$ and $y$ has length at least $N$.
\end{enumerate}
 If there is no path in $H_1$ connecting $x$ and $y$ (note that this is the case for the two-scale graph depicted
 in Figure \ref{Fig: graph}) we assume by convention that both vertices are connected by a path of infinite length
 so condition 2 above holds.
 Then, our proof of Theorem \ref{coexistence} implies that, for the two-scale multitype contact process evolving on $G$,
 the Lebesgue measure of the coexistence region is strictly positive provided $N$ is sufficiently large.
 Note that, when $H_1$ is a connected graph, condition 1 above is always satisfied.
 In particular, a natural way to construct a suitable graph $G$ is to start from an infinite connected graph $H_1$, then
 select an infinite subset $V_2 \subset V_1$ of vertices that are at least distance $N$ from each other, and finally add
 enough edges between the vertices in $V_2$ to obtain a mesoscopic graph $H_2$ with at least one infinite self-avoiding path.

\indent Returning to a microscopic graph made of infinitely many finite connected components and thinking of each component
 as a patch, one can legitimately argue that if two patches are connected by an edge of the mesoscopic structure
 then all the vertices of one patch should be connected to all the vertices of the other patch by a mesoscopic edge.
 In a number of contexts, however, patches are arbitrarily large yet adjacent patches are connected through only few
 vertices, in which case our general framework can capture the main features of the dynamics.
 This is the case, for instance, in metastatic diseases, such as malignant tumor cells that first spread within a given
 organ for a long time and then infect quickly other organs while reaching the bloodstream.
 In this context, the connected components of the microscopic structure represent organs or parts of the organs of the
 human body and the mesoscopic structure the vascular system.
 In a different context, one can think of the microscopic structure as a set of major cities and the mesoscopic
 structure as an airline network, where two types of diseases spread: one highly infectious disease such as H1N1
 influenza that spreads quickly through the microscopic structure but slowly through the mesoscopic one due to airport
 screenings (best competitor), and one moderately infectious disease that spreads at an equal speed through the whole
 structure of the network (best invader).


\section{Proof of Theorem \ref{duality}}
\label{Sec: duality}

\indent Theorem \ref{duality} has been proved by Neuhauser \cite{neuhauser_1992} for the multitype contact process on the
 regular lattice, which corresponds to the case $N = 1$ for our process.
 Her proof relies on duality.
 Thinking of the process as being generated by a graphical representation, the dual process of the multitype contact process
 starting at a space-time point $(x, T)$ exhibits a tree structure that induces an ancestor hierarchy in which the ancestors
 are arranged according to the order they determine the type of the particle at $(x, T)$.
 Her result follows from the existence of a sequence of renewal points dividing the path of the first ancestor into
 independent and identically distributed pieces, as stated in details in Proposition \ref{neuhauser} below.

\indent Similarly, the two-scale contact process is self-dual and the dual process exhibits a tree structure that allows us
 to identify a first ancestor.
 Renewal points can be defined from the topology of the dual process by using the same algorithm as for the multitype contact
 process.
 However, since the graph on which the particles compete is not homogeneous, the space-time displacements between consecutive
 renewal points are no longer identically distributed.
 The key to our proof is to rely on the fact that the graph of Figure \ref{Fig: graph} is invariant by translation of
 vector $u \in N \Z^d$ to show the existence of a subsequence of renewal points performing a random walk.

\indent To define the dual process, we first use an idea of Harris \cite{harris_1972} to construct the two-scale contact
 process graphically from collections of independent Poisson processes.
 These processes are defined for each directed edge $(x, y)$ or vertex $x$ as indicated in Table \ref{Tab-1}.
 The last two columns show the rate of the Poisson processes and the symbols used to construct the graphical representation,
 respectively.
 Unlabeled arrows from $x$ to $y$ indicate birth events: provided site $x$ is occupied and site $y$ empty, $y$ becomes occupied by
 a particle of the same type as the one at $x$.
 The same holds for type 2 arrows if the particle at site $x$ is of type 2, but these arrows are forbidden for type 1 particles,
 which takes into account the selective advantage of type 2.
 Finally, a $\times$ at site $x$ indicates that a particle of either type at this site is killed.
 This graphical representation allows to construct the two-scale multitype contact process starting from any initial configuration.

\begin{table}[t]
\begin{center}
\begin{tabular}{|c|c|c|p{160pt}|}
\hline
 rate & symbols & defined for each \ldots & effect on the configuration of the process \\ \hline
 $B_1$ & $x \,\longrightarrow \,y$ &
       edge $(x, y)$ with $x \leftrightarrow y$ &
       if $y$ is empty, it becomes of the same type as vertex $x$. \\ \hline
 $B_2 - B_1$ & $x \,\overset{2}{\longrightarrow} \,y$ &
       edge $(x, y)$ with $x \leftrightarrow y$ &
       if $x$ is of type 2 and $y$ is empty, $y$ becomes occupied by a type 2 particle. \\ \hline
 $\beta_1$ & $x \,\longrightarrow \,y$ &
       edge $(x, y)$ with $x \sim y$ &
       if $y$ is empty, it becomes of the same type as vertex $x$. \\ \hline
 $\beta_2 - \beta_1$ & $x \,\overset{2}{\longrightarrow} \,y$ &
       edge $(x, y)$ with $x \sim y$ &
       if $x$ is of type 2 and $y$ is empty, $y$ becomes occupied by a type 2 particle. \\ \hline
  1 & $\times$ at vertex $x$ &
       vertex $x \in \Z^d$ &
       if it exists, the particle at vertex $x$ is killed regardless of its type. \\ \hline
\end{tabular}
\end{center}
\caption{\upshape{Harris' graphical representation}}
\label{Tab-1}
\end{table}

\indent The construction of the dual process is done from the graphical representation by ignoring the labels on the arrows.
 We say that there is a path from $(y, T - s)$ to $(x, T)$, which corresponds to a dual path from $(x, T)$ to $(y, T - s)$,
 if there are sequences of times and sites
 $$ s_0 \ = \ T - s \ < \ s_1 \ < \ \cdots \ < \ s_{n + 1} \ = \ T \qquad \hbox{and} \qquad
    x_0 \ = \ y, \,x_1, \,\ldots, \,x_n \ = \ x $$
 such that the following two conditions hold:
\begin{enumerate}
 \item For $i = 1, 2, \ldots, n$, there is an arrow from $x_{i - 1}$ to $x_i$ at time $s_i$ and \vspace{4pt}
 \item For $i = 0, 1, \ldots, n$, the vertical segments $\{x_i \} \times (s_i, s_{i + 1})$ do not contain any $\times$'s.
\end{enumerate}
 The dual process starting at $(x, T)$ is the set-valued process defined by
 $$ \hat \eta_s (x, T) \ = \ \{y \in \Z^d : \hbox{there is a dual path from $(x, T)$ to $(y, T - s)$} \}. $$
 The dual process is naturally defined only for $0 \leq s \leq T$.
 Nevertheless, it is convenient to assume that the Poisson processes in Table \ref{Tab-1} are defined
 for negative times so that the dual process is defined for all $s \geq 0$.
 The reason for introducing the dual process is that it allows us to deduce the presence of a particle at site $x$ at
 time $T$ from the configuration at earlier times by keeping track of potential ancestors.
 This appears in the duality relationship
 $$ \eta_T (x) \ \neq \ 0 \qquad \Longleftrightarrow \qquad
    \eta_{T - s} (y) \ \neq \ 0 \ \ \hbox{for some} \ \ y \in \hat \eta_s (x, T). $$
 The \emph{type} of the particle at vertex $x$ at time $T$ can also be determined from the dual process and the configuration at
 earlier times: the dual process exhibits a tree structure that induces a so-called ancestor hierarchy.
 The members of the dual process at a fixed time (ancestors) are arranged according to the order they determine the type
 of $(x, T)$.

\begin{figure}[t]
\centering
\scalebox{0.40}{\input{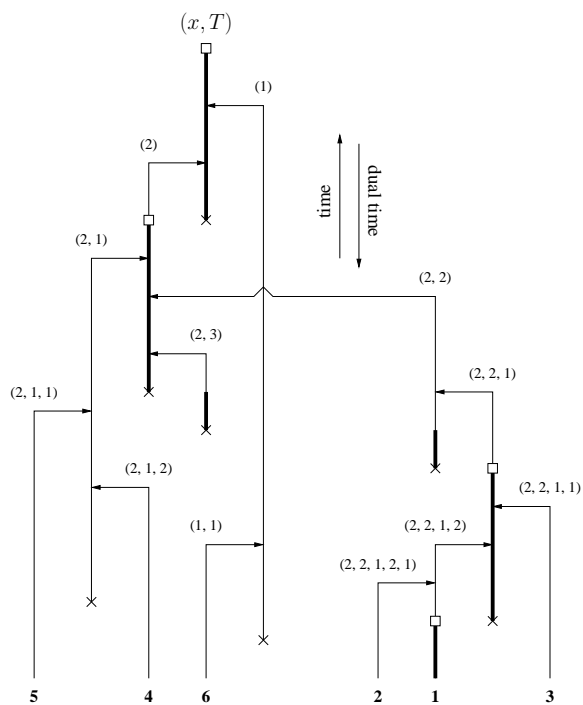}}
\caption{\upshape{Labeled tree structure of the dual process ($d = 1$ and $N = 5$).
 The bold lines refer to the path of the first ancestor which is determined by keeping track of the branch with the
 largest label.
 Numbers at the bottom of the structure indicate the ancestor hierarchy at a given time.
 White squares represent the sequence of renewal points assuming that the tree starting at the last white square at the
 bottom of the picture is infinite.}}
\label{Fig: lexico}
\end{figure}

\indent The ancestor hierarchy has been described algorithmically by Neuhauser.
 See the beginning of Section 2 in \cite{neuhauser_1992}.
 The original version of her algorithm is quite intuitive and, although it has been applied in a number of articles, did not
 evolve since then.
 We take advantage of this article to define rigorously the ancestor hierarchy from the tree structure of the dual process,
 following and improving an idea introduced in \cite{lanchier_neuhauser_2006}.
 Even if the dual processes of the contact process and two-scale contact process are different, their topologies are
 similar enough so that our approach applies to both models.
 The idea is to define a function $\phi_s$ that maps the dual process at dual time $s$ into the set of sequences with values
 in $\{1, 2, \ldots \,\} \cup \{\infty \}$ equipped with the lexicographic order $\ll$.
 This function is strictly monotone in the sense that for $x_1, x_2 \in \hat \eta_s (x, T)$,
\begin{equation}
\label{eq: hierarchy}
 \phi_s (x_1) \ \ll \ \phi_s (x_2) \quad \Longleftrightarrow \quad \hbox{$x_2$ comes before $x_1$ in the ancestor hierarchy}.
\end{equation}
 The ancestor hierarchy on the dual process is thus naturally induced by the lexicographic order on the set of sequences through
 the function $\phi_s$.
 Recall that
 $$ (u_1, u_2, \ldots) \ \ll \ (v_1, v_2, \ldots) \quad \hbox{if and only if} \quad
   \left\{\begin{array}{l}
           u_i = v_i \ \ \hbox{for} \ i = 1, 2, \ldots, n - 1 \vspace{3pt} \\ u_n < v_n \end{array} \right. $$
 for some positive integer $n$.
 The function $\phi_s$ corresponds to a labeling of the tree structure of the dual process which is defined inductively by
 going backwards in time.
 The original branch starting at site $x$ at dual time 0 is labeled $(\infty, \infty, \ldots)$.
 Now, assume that a birth event from $x_1$ to $x_2$ occurs at dual time $s_0$ to include site $x_1$ to the dual process, and let
\begin{equation}
\label{eq: parent}
 \phi_{s_0} (x_2) \ = \ (u_1, u_2, \ldots, u_n, \infty, \ldots) \ \ \hbox{with} \ \ u_i < \infty \ \ \hbox{for} \ i = 1, 2, \ldots, n,
\end{equation}
 denote the label on the parent branch (the one at site $x_2$ at dual time $s_0$).
 Then, set
 $$ \phi_{s_0} (x_1) \ = \ (u_1, u_2, \ldots, u_n, m, \infty, \ldots) $$
 if the new branch at site $x_1$ is the $m$th one (going backwards in time) originated from the branch at site $x_2$.
 Also, $\phi_s (x_1) = \phi_{s_0} (x_1)$ until a death mark $\times$ is encountered at site $x_1$ when the site is removed from
 the dual process.
 We refer the reader to Figure \ref{Fig: lexico} for an example of realization of the dual process together with the label
 function along the branches of the tree.
 To avoid cumbersome notations, the sequences are identified to finite dimensional vectors:
 $$ (u_1, u_2, \ldots, u_n, \infty, \ldots) \ \equiv \ (u_1, u_2, \ldots, u_n) $$
 where $u_i < \infty$ for $i = 1, 2, \ldots, n$.
 The label function is ideally suited to keep track of the topology of the dual process and has a number of interesting properties.
 For instance, if \eqref{eq: parent} holds then all the branches in the sub-tree starting at $(x_2, T - s_0)$ have a label
 of the form
 $$ (u_1, u_2, \ldots, u_n, u_{n + 1}, \ldots, u_{n + k}, \infty, \ldots) \ \ \hbox{for some} \ \ u_{n + 1}, \ldots, u_{n + k} < \infty. $$
 In particular, given the labels of two branches, it is straightforward to deduce the label of their most recent common
 ancestor (the root of the minimal sub-tree they both belong to) by looking at the first coordinates they have in common.
 The form of the sequence in \eqref{eq: parent} also indicates that the length of the dual path (number of arrows) from
 $(x, T)$ to $(x_2, T - s_0)$ is equal to $n$, which can be seen as a number of generations.
 Also, it is easy to check that the description of the ancestor hierarchy given in Section 2 of \cite{neuhauser_1992} reduces
 to \eqref{eq: hierarchy} above.

\indent The path of the first ancestor, which is the object of primary interest to determine the type of the particle at $(x, T)$,
 is constructed by following backwards in time the branch with the largest label as defined in condition \eqref{eq: hierarchy}.
 The path of the first ancestor is a complicated object.
 However, in the case of the multitype contact process, Neuhauser \cite{neuhauser_1992} proved that this path can be divided
 into independent and identically distributed pieces at some particular space-time points called renewal points.
 To define renewal points, we say that $(X, \tau)$ lives forever if
\begin{equation}
\label{eq: live_1}
 \hat \eta_s (X, \tau) \ \neq \ \varnothing \qquad \hbox{for all} \ \ s \geq 0.
\end{equation}
 Then, whenever the path of the first ancestor jumps to a point $(X, \tau)$ that lives forever, this point is a renewal point.
 Note that renewal points are well defined only if the starting point $(x, T)$ lives forever, which we assume from now on.
 Let $\{(X_n, \tau_n) : n \geq 0 \}$ denote the sequence of renewal points starting from $(X_0, \tau_0) = (x, 0)$.
 See Figure \ref{Fig: lexico} for a picture.
 For the multitype contact process, which is the case $N = 1$ for our process, Neuhauser \cite{neuhauser_1992} proved
 the following proposition.

\begin{propo}[Neuhauser, 1992]
\label{neuhauser}
 Assume that $N = 1$.
 Then $\{(X_n, \tau_n) : n \geq 0 \}$ performs a random walk on $\Z^d \times \R_+$.
 Moreover, there exist $C_1 < \infty$ and $\gamma_1 > 0$ such that
 $$ P \,(\norm{X_{n + 1} - X_n} > s) \ \leq \ C_1 \,\exp (- \gamma_1 s) \quad \hbox{and} \quad
    P \,(\tau_{n + 1} - \tau_n > s) \ \leq \ C_1 \,\exp (- \gamma_1 s) $$
 for all $n \geq 0$ and all $s \geq 0$.
\end{propo}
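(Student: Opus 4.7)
The plan is to exploit two features that are available when $N = 1$: the underlying graph is $\Z^d$, hence vertex-transitive, and Harris's graphical representation is built from independent Poisson processes whose joint law is spatially translation-invariant. Both conclusions of the proposition, namely the random walk structure and the exponential tails, will follow from these symmetries combined with a ``geometric number of trials'' reduction.

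For the random walk structure, the defining condition \eqref{eq: live_1} at a candidate point $(X, \tau)$ is measurable with respect to the Poisson processes in the dual-time half-line $\{s > \tau\}$ alone. Consequently, conditionally on $(X_n, \tau_n) = (y, t)$, the dual process restarted from $(y, t)$ is driven by a fresh, independent copy of the graphical representation and, by spatial translation, has the same law as the dual process started at $(0, 0)$ conditioned on $(0, 0)$ living forever. Applying the strong Markov property successively at $\tau_1, \tau_2, \ldots$ shows that the increments $(X_{n + 1} - X_n, \tau_{n + 1} - \tau_n)$ are i.i.d., so $\{(X_n, \tau_n)\}$ is a random walk on $\Z^d \times \R_+$.

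For the exponential tails, the path of the first ancestor is piecewise constant in space: between jumps it sits at a single site, which dies at rate $\delta_1 = 1$, at which moment the first ancestor relocates to whichever surviving branch now carries the largest label. At each such jump, conditionally on the past, the newly selected point has probability at least $p > 0$ of living forever, where $p$ is the survival probability of the supercritical contact process from a singleton; the conditional independence comes from the fact that this survival event involves only Poisson marks strictly after the jump time. Hence the number $K$ of jumps needed to reach a renewal point is dominated by a geometric variable with parameter $p$, so $\tau_{n + 1} - \tau_n$ is dominated by a geometric sum of independent $\operatorname{Exp}(1)$ times and inherits an exponential tail. The bound on $\norm{X_{n + 1} - X_n}$ is obtained by the same geometric reduction, together with the standard linear-growth estimates on the contact process used to control the spread of the dual cluster during each $\operatorname{Exp}(1)$ waiting interval.

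The main obstacle is to control the spatial displacement at each single jump: unlike the time increment, the new position of the first ancestor depends on the full topology of the dual tree at the jump time, since $\phi_s$ may select a branch lying relatively far from the dying one. Handling this step requires showing that the branches carrying the largest remaining labels cannot be too far spatially from the dying site, which in turn combines the finite propagation speed of the graphical representation with the combinatorial structure of the label function $\phi_s$.
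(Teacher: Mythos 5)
The paper does not prove this proposition itself --- it is quoted from Neuhauser (1992) --- but your outline follows her renewal strategy in spirit, and the random-walk half of your argument (i.i.d.\ increments from spatial translation invariance plus the fact that the jump location is determined by the graphical representation on one side of the renewal time while survival is determined by the other side) is essentially correct. The tail estimate, however, has a genuine gap. You treat every jump of the first ancestor as an independent trial with success probability $p$, conclude that the number of jumps to the next renewal point is dominated by a Geometric$(p)$ variable, and that $\tau_{n+1}-\tau_n$ is a geometric sum of Exp$(1)$ sojourns. This fails because the successive jump targets are nested: by the lexicographic order, every branch in the subtree rooted at the current first ancestor carries a larger label than every branch outside it, so once the first ancestor enters the subtree of a failed candidate $(y,s)$ it cannot leave until that entire subtree is extinct, and every intermediate jump target inside a dying subtree fails with conditional probability one given that the enclosing subtree dies. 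The failures are therefore strongly positively correlated, the number of jumps is not geometrically dominated, and the waiting time between two genuinely fresh trials is not Exp$(1)$ but the extinction time of a dual cluster conditioned to die out. The correct decomposition is a Geometric$(p)$ number of excursions into dying subtrees, each excursion having exponentially tailed duration by the estimate $P\,(t < \pi < \infty) \leq C\exp(-\gamma t)$ for the supercritical contact process (comparison with oriented percolation; the same input the paper invokes in the proof of Lemma 3.3). Your argument never uses this nontrivial fact, and without it the exponential tail of $\tau_{n+1}-\tau_n$ does not follow.

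The spatial bound is also left unfinished: you flag the single-jump displacement as the ``main obstacle'' and propose to control it through the combinatorics of $\phi_s$, but you do not carry this out --- and no such jump-by-jump control is needed. The whole dual cluster started at a renewal point is contained, up to an exponentially small exceptional probability, in a ball whose radius grows linearly in dual time (domination by Richardson's model, exactly as the paper notes at the end of the proof of Proposition \ref{random_walk}), so $\norm{X_{n+1}-X_n}$ is at most a constant times $\tau_{n+1}-\tau_n$ plus an exponentially tailed error, and the spatial tail follows from the temporal one. Replacing your last paragraph by this observation and repairing the excursion decomposition as above would yield a complete proof.
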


\noindent Proposition \ref{neuhauser} is the key to understand the ancestry of a single site and deduce how different
 sites are correlated.
 Unfortunately, it does not hold when $N \geq 3$ in which case the space-time displacements between consecutive renewal
 points are no longer identically distributed due to the geometry of the graph in Figure \ref{Fig: graph}.
 Observing that the parent of a particle located at the corner of a patch has to be closer to the center than the particle
 itself, we see that the path of the first ancestor undergoes a drift directed to the center of the patch.
 The drift is stronger while approaching the boundary of the patch.
 Also, the path of the first ancestor is more likely to jump from one patch to another while approaching the center of
 a patch.
 The key to prove Theorem \ref{duality} is to observe that the graph is invariant by translation of vector $u \in N \Z^d$.
 This implies that the distributions of the dual processes starting at $x$ and $x + u$, respectively, can be deduced from
 one another by a translation of vector $\pm u$, and suggests the existence of subsequences of renewal points satisfying
 Proposition \ref{neuhauser}.
 More precisely, we extract a subsequence inductively by letting
 $$ n_0 \ = \ 0 \qquad \hbox{and} \qquad
    n_{i + 1} \ = \ \min \,\{n > n_i : X_n \in N \Z^d \} \quad \hbox{for} \ i = 0, 1, 2, \ldots $$
 Note that, regardless of $X_n$, the probability that $X_{n + 1} \in N \Z^d$ is bounded from below by a positive constant
 that only depends on $N$.
 This, together with the Borel-Cantelli Lemma and the fact that what happens before and after a certain renewal point
 uses disjoint parts of the graphical representation and so is independent, implies that $n_i$ is almost surely
 finite for all $i$.
 Let
 $$ Y_i \ = \ X_{n_i} \quad \hbox{and} \quad \sigma_i \ = \ \tau_{n_i} \quad \hbox{for all} \ i \geq 0. $$
 Then, $\{(Y_i, \sigma_i) : i \geq 1 \}$ represents the subsequence of renewal points visiting the center of patches
 and we have the following proposition.

\begin{propo}
\label{random_walk}
 The subsequence $\{(Y_i, \sigma_i) : i \geq 1 \}$ is a random walk on $N \Z^d \times \R_+$ regardless of the value
 of $N \geq 1$.
 Moreover, there exist $C_2 < \infty$ and $\gamma_2 > 0$ such that
 $$ P \,(\norm{Y_{i + 1} - Y_i} > s) \ \leq \ C_2 \,\exp (- \gamma_2 s) \quad \hbox{and} \quad
    P \,(\sigma_{i + 1} - \sigma_i > s) \ \leq \ C_2 \,\exp (- \gamma_2 s) $$
 for all $i \geq 1$ and all $s \geq 0$.
\end{propo}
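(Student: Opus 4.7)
The plan is to establish the three defining properties of a random walk with exponential increments: independence, stationarity, and exponential tails. The first two come essentially for free once one observes that the two-scale graph is invariant under any translation by $u \in N\Z^d$ and that renewal points are strong Markov times for the graphical representation.

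For independence, I would use that each renewal point $(X_{n_i}, \tau_{n_i})$ lies on the first-ancestor path and that the event of being a renewal point is measurable with respect to the Poisson marks in Table \ref{Tab-1} at real times below $\tau_{n_i}$. The portion of the first-ancestor path that leads to the next renewal point is built from marks at strictly earlier real times inside the random cone emanating from $(X_{n_i}, \tau_{n_i})$, which is disjoint from the data that produced the path up to $(X_{n_i}, \tau_{n_i})$. Hence $(Y_{i+1} - Y_i, \sigma_{i+1} - \sigma_i)$ is conditionally independent of $\{(Y_j, \sigma_j) : j \leq i\}$ given $(Y_i, \sigma_i)$. For stationarity, I would then translate the configuration by $-Y_i \in -N\Z^d$: this preserves both the graph and the law of the Poisson marks, so the conditional law of the increment coincides with the one starting from the origin and does not depend on $i$.

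For exponential tails I would combine two ingredients. First, a uniform single-step estimate
$$P\,(\norm{X_{n+1} - X_n} > s \mid X_n) \ \leq \ C \exp(-\gamma s), \qquad P\,(\tau_{n+1} - \tau_n > s \mid X_n) \ \leq \ C \exp(-\gamma s),$$
which is the analogue of Proposition \ref{neuhauser} for the two-scale process and can be proved by the very same renewal-point construction, with the caveat that translation invariance on $\Z^d$ must be replaced by bounds on the supercritical survival probability of a freshly created dual that hold uniformly in the starting vertex (a single arrow moves the first ancestor by at most $N$, and a geometric number of arrows is needed before a forever-surviving branch appears). Second, the excerpt already notes that the probability $p_0 = p_0(N) > 0$ that $X_{n+1} \in N\Z^d$ is bounded below independently of $X_n$, so $n_{i+1} - n_i$ is stochastically dominated by a geometric random variable with parameter $p_0$. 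A geometric sum of independent random variables with exponential tails itself has exponential tails (standard moment-generating-function computation), so both $\norm{Y_{i+1} - Y_i}$ and $\sigma_{i+1} - \sigma_i$ satisfy the desired bound.

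The main obstacle is the uniform single-step estimate above, since Neuhauser's proof relies heavily on translation invariance under $\Z^d$, which fails here: the law of the single-step increment genuinely depends on the position of $X_n$ inside its patch. The remedy is to replace translation invariance by supercriticality estimates that hold uniformly in the starting vertex, which are available under our standing assumption that each type survives in the absence of the other and under which a freshly created sub-dual has a positive probability of surviving forever that depends only on the rates and on $N$, not on the vertex.
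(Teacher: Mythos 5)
Your proposal is correct and follows essentially the same route as the paper: independence from the disjointness of the graphical representation before and after a renewal point, stationarity from invariance of the graph under translations in $N\Z^d$ (valid for $i \geq 1$ since $Y_i$ is then a patch center), and exponential tails by dominating $n_{i+1}-n_i$ by a geometric variable and summing uniformly exponentially bounded single-step increments via a moment-generating-function computation. The only cosmetic difference is that the paper deduces the spatial tail from the temporal one using the at-most-linear growth of the dual (comparison with Richardson's model), whereas you bound the spatial single-step increments directly; both work.
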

\begin{proof}
 Proposition \ref{neuhauser} indicates that the increments $(X_{n + 1}, \tau_{n + 1}) - (X_n, \tau_n)$ are independent for
 different values of $n$.
 This follows from the fact that, after a renewal point, the path of the first ancestor is determined by the structure of the
 sub-tree starting at the renewal point (rather than the ``super structure'' of the entire dual process) and so depends only
 on parts of the graphical representation that are after the renewal point.
 The same holds for $\{(Y_i, \sigma_i) : i \geq 0 \}$ because it is a subsequence of $\{(X_n, \tau_n) : n \geq 0 \}$.
 Now, since the graphical representation is translation invariant in time and the graph of Figure \ref{Fig: graph} is invariant
 by translation of vector $u \in N \Z^d$, we have that for all $s \geq 0$ and any collection $A$ of subsets of $\Z^d$
 $$ P \,(\hat \eta_s (x, T) - x \in A) \ = \ P \,(\hat \eta_s (y, S) - y \in A) \quad \hbox{whenever} \ \ x - y \in N \Z^d. $$
 In particular, using that $Y_{i + 1} - Y_i \in N \Z^d$ for $i \geq 1$, we obtain
 $$ P \,(\hat \eta_s (Y_i, T - \sigma_i) - Y_i \in A) \ = \
    P \,(\hat \eta_s (Y_{i + 1}, T - \sigma_{i + 1}) - Y_{i + 1} \in A) \quad \hbox{for all} \ \ i \geq 1. $$
 Note however that this does not hold for $i = 0$ since $Y_0 = x$ is not \emph{a priori} at the center of a patch.
 Since $(Y_{i + 1}, \sigma_{i + 1})$ is determined from $\{\hat \eta_s (Y_i, T - \sigma_i) : s \geq 0 \}$, we deduce that
 $$ P \,((Y_{i + 1}, \sigma_{i + 1}) - (Y_i, \sigma_i) \in B) \ = \
    P \,((Y_{i + 2}, \sigma_{i + 2}) - (Y_{i + 1}, \sigma_{i + 1}) \in B) $$
 for any measurable set $B \subset \Z^d \times \R_+$ and all $i \geq 1$.
 In conclusion, the space-time displacements are identically distributed which makes $\{(Y_i, \sigma_i) : i \geq 1 \}$ a random walk.
 To prove the exponential bounds on the space-time displacements, we first observe that, the parameter $N$ being fixed, there
 exists a positive constant $p_N > 0$ such that
 $$ P \,(X_{n + 1} \in N \Z^d \ | \ X_n = y) \ \geq \ p_N \quad \hbox{for all} \ \ y \in \Z^d \ \ \hbox{and all} \ \ n \geq 0. $$
 In words, the next renewal point visiting the center of a patch can be found after at most a geometric number of steps,
 which can be expressed formally as follows:
 $$ P \,(n_{i + 1} - n_i \geq j) \ \leq \ P \,(K \geq j) \quad \hbox{for all} \ \
    j \geq 1 \quad \hbox{where} \quad K \sim \hbox{Geometric} \,(p_N). $$
 Now, members of $\{\tau_{n + 1} - \tau_n : n \geq 0 \}$ are clearly independent.
 Moreover, the same arguments as in the proof of Proposition \ref{neuhauser} that can be found in \cite{neuhauser_1992} imply the
 existence of a continuous random variable $\tau$ with exponentially bounded tails such that
 $$ P \,(\tau_{n + 1} - \tau_n > s) \ \leq \ P \,(\tau > s) \quad \hbox{for all $s > 0$ and $n \geq 0$}. $$
 Let $M (X, \theta)$ stand for the moment generating function of $X$ at point $\theta$.
 Since $K$ is geometrically distributed and temporal increments between consecutive renewal points are independent and uniformly
 stochastically smaller than $\tau$, there exists $\theta_0 > 1$ such that
 $$ \begin{array}{rcl}
     M (\sigma_{i + 1} - \sigma_i, \theta) & = &
     M (\tau_{n_{i + 1}} - \tau_{n_i}, \theta) \ \leq \ M (\tau_K, \theta) \vspace{2pt} \\ & \leq &
    \displaystyle \sum_{n = 1}^{\infty} \ P (K = n) \ M (\tau_1, \theta) \,M (\tau_2, \theta) \ \cdots \ M (\tau_n, \theta) \vspace{-6pt} \\ && \hspace{25pt} \leq \
    \displaystyle \sum_{n = 1}^{\infty} \ P (K = n) \ [M (\tau, \theta)]^n \ = \ M (K, M (\tau, \theta)) \ < \ \infty \end{array} $$
 for all $\theta \in (1, \theta_0)$.
 By using Markov's inequality, we obtain that for $\theta \in (1, \theta_0)$ fixed,
 $$ P \,(\sigma_{i + 1} - \sigma_i > s) \ \leq \
      \theta^{\, -s} \ M (\sigma_{i + 1} - \sigma_i, \theta) \ = \ C_2 \,\exp (- \gamma_2 s) $$
 where $C_2 = M (\sigma_{i + 1} - \sigma_i, \theta) < \infty$ and $\gamma_2 = \log \theta > 0$.
 The exponential bound on the spatial displacements between consecutive renewal points follows from the exponential bound on the
 temporal displacements since the dual process grows at most linearly.
 This follows from the fact that the two-scale contact process is self-dual and that the lengths of all the invasion paths at a given
 time $t$ are uniformly bounded stochastically by $N$ times the radius of the Richardson's model with parameter $\max (B_2, \beta_2)$
 at time $t$.
\end{proof}

\noindent With Proposition \ref{random_walk} in hands, Theorem \ref{duality} in the general case when $N \geq 1$ follows
 from the techniques developed in Sections 2-5 of \cite{neuhauser_1992}.
 The main idea in the neutral case is that the random walk $\{Y_i : i \geq 1 \}$ is recurrent in $d \leq 2$ which implies that
 the paths of the first ancestors of a finite number of sites eventually coalesce with probability 1.
 This makes these sites identical by descent which translates into a clustering of the system.
 In contrast, transience of the random walks in dimension $d \geq 3$ implies that two distinct sites may not be identical by
 descent in which case the states of both sites are independent whenever all the sites are independent at time 0.


\section{The multitype contact process in finite volume.}
\label{Sec: multitype}

\indent In preparation for proving Theorems \ref{survival} and \ref{coexistence}, we investigate the restriction of the multitype
 contact process to a single patch, i.e., particles landed outside the patch are killed.
 The process is further modified to include spontaneous births of type 1 particles at the center of the patch.
 The reason for introducing spontaneous births is to later compare this process with the two-scale contact process viewed on a
 single patch in which offspring may originate from other patches.
 More precisely, the state of the process at time $t$ is a spatial configuration $\eta_t^N : A_0 \longrightarrow \{0, 1, 2 \}$ where
 $$ A_0 \ = \ (- N / 2, N / 2)^d \,\cap \,\Z^d $$
 is the patch centered at 0.
 The evolution at $x \in A_0$ is described by
 $$ \begin{array}{rcl}
    c_{0 \,\to \,1} (x, \eta^N) & = & \displaystyle \beta_1 \ \sum_{x \sim y} \ \ind \{\eta^N (y) = 1 \} \ + \ 2d \,B_1 \vspace{4pt} \\
    c_{0 \,\to \,2} (x, \eta^N) & = & \displaystyle \beta_2 \ \sum_{x \sim y} \ \ind \{\eta^N (y) = 2 \} \vspace{4pt} \\
    c_{1 \,\to \,0} (x, \eta^N) & = & c_{2 \,\to \,0} (x, \eta^N) \ = \ 1. \end {array} $$
 where $x \sim y$ indicates that $\norm{x - y} = 1$ and $x, y \in A_0$.
 In order to understand both the multitype contact process in finite volume and the two-scale contact process, the first
 step is to compare the processes viewed on suitable length and time scales with oriented percolation.
 To begin with, we introduce 1-dependent oriented percolation with parameter $1 - \ep$ on
 $$ \mathcal G \ = \ \{(z, n) \in \Z^d \times \Z_+ : z^1 + \ldots + z^d + n \hbox{ is even} \} $$
 where $z^i$ denotes the $i$th coordinate of $z \in \Z^d$.
 Each site $(z, n) \in \mathcal G$ is associated with a Bernoulli random variable $\omega (z, n) \in \{0, 1 \}$ with success
 probability $1 - \ep$, and is said to be closed if there is a failure (0) and open if there is a success (1).
 The 1-dependency means that
 $$ P \,(\omega (z_i, n_i) = 1 \ \hbox{for} \ 1 \leq i \leq m) \ = \ (1 - \ep)^m \quad \hbox{whenever} \
      \norm{(z_i, n_i) - (z_j, n_j)} > 1 \ \hbox{for} \ i \neq j. $$
 A site $(z, n)$ is said to be wet (at level $n$) if there exist $z_0, z_1, \ldots, z_n = z$ such that \vspace{-2pt}
\begin{enumerate}
 \item For $i = 0, 1, \ldots, n - 1$, we have $\norm{z_{i + 1} - z_i} = 1$. \vspace{4pt}
 \item For $i = 0, 1, \ldots, n$, site $(z_i, i)$ is open.
\end{enumerate}
 From now on, the parameter $\ep > 0$ is fixed so that
 $$ P \,(W_n \neq \varnothing \ \hbox{for all} \ n \geq 0 \ | \ W_0 = \{0 \}) \ > \ 0, $$
 where $W_n$ denotes the set of wet sites at level $n$.
 To investigate the interacting particle system restricted to a single patch, we let $N = (K + 2) L$ with $K$ and $L$ odd,
 which induces a partition of patch $A_0$ into $L$-cubes, and also consider oriented percolation on
 $$ \begin{array}{rcl}
    \mathcal G_K & = & \mathcal G \,\cap \,\{(- K / 2, K / 2)^d \times \Z_+ \} \vspace{4pt} \\ & = &
    \{(z, n) \in \Z^d \times \Z_+ : z^1 + \ldots + z^d + n \hbox{ is even and} \ \sup_i |z^i| \leq (K - 1) / 2 \}. \end{array} $$
 For the oriented percolation process on $\mathcal G_K$ we denote by $\omega^K (z, n)$ the Bernoulli random variable associated
 to site $(z, n)$ and by $W_n^K$ the corresponding set of wet sites at level $n$.
 The proof of Theorems \ref{survival} and \ref{coexistence} relies on multiscale arguments.
 More precisely, we will consider the following three mesoscopic spatial scales:
\begin{enumerate}
 \item Upper mesoscopic scale.
  The lattice $\Z^d$ is partitioned into $N$-cubes called patches.
  In the next section, we compare the two-scale contact process viewed at the patch level with the oriented percolation
  process on $\mathcal G$ introduced above. \vspace{4pt}
 \item Intermediate mesoscopic scale.
  Each $N$-cube is partitioned into $L$-cubes:
  $$ B_z \ = \ L z \ + \ B_0 \quad \hbox{where} \quad B_0 \ = \ (- L / 2, L / 2)^d \,\cap \,\Z^d \ \hbox{and} \ z \in \Z^d. $$
  In this section, we compare the multitype contact process on patch $A_0$ viewed at the $L$-cube level with the
  oriented percolation process on $\mathcal G_K$ introduced above. \vspace{4pt}
 \item Lower mesoscopic scale.
  Each $L$-cube $B_z$ is further divided into $L^{0.1}$-cubes:
  $$ D_w \ = \ L^{0.1} w \ + \ D_0 \quad \hbox{where} \quad D_0 \ = \ (-L^{0.1} / 2, L^{0.1} / 2)^d \,\cap \,\Z^d \ \hbox{and} \ w \in \Z^d. $$
\end{enumerate}
 Since $A_0$ is finite, type 2 particles go extinct eventually, and the process $\eta_t^N$ converges weakly to a stationary
 distribution with a positive density of type 1.
 The main objective of this section is to prove that when $\beta_2 > \beta_1$ and starting with a significant number of
 type 2 particles the time to extinction grows exponentially with the size of the patch $A_0$.
 Moreover, locally in space and time, type 2 outcompetes type 1 in the sense that, excluding a neighborhood of site 0 whose size
 does not depend on $N$, most of the patch is void of 1's and has a positive density of 2's up to the extinction time.
 To prove these results, the first step is to couple the process with the oriented percolation process on the lattice $\mathcal G_K$
 through the following
\begin{defin}
\label{good}
 Let $B_* = (- L / 6, L / 6)^d$ and $T = L^2$.
 We call $(z, n) \in \mathcal G_K$ a good site if \vspace{-2pt}
\begin{enumerate}
 \item The set $B_z \setminus B_*$ is void of 1's at time $nT$ and \vspace{4pt}
 \item For all $w$ such that $D_w \subset B_z \setminus B_*$, the set $D_w$ contains at least one 2 at time $nT$.
\end{enumerate}
 The set of good sites at level $n$ is denoted by
 $$ X_n^K \ = \ \{z \in \Z^d : (z, n) \in \mathcal G_K \ \hbox{and} \ (z, n) \ \hbox{is good} \} $$
\end{defin}
 Note that site $(0, n)$ has a special treatment: there is no requirement about the spatial configuration of
 the system inside $B_* \subset B_0$.
 The reason is that, due to the presence of spontaneous births, there is a region around the center of the patch which is
 occupied by 1's most of the time.
 Note also that we have excluded boxes $B_z$ with $\sup_i |z^i| = (K + 1) / 2$ which are located along the frontier
 of patch $A_0$ since, due to boundary effects, the density of type 2 particles in these boxes can
 shrink significantly.
 The key result of this section is the following

\begin{propo}
\label{coupling_1}
 Let $\beta_2 > \beta_1 > \beta_c$.
 Then, for $L = L (\ep)$ sufficiently large, the processes can be constructed on the same probability space in such a way that
 $$ P \,(W_n^K \subset X_n^K \ \hbox{for all} \ n \geq 0 \ | \ W_0^K = X_0^K) \ = \ 1. $$
\end{propo}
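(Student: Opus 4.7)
The plan is a standard block construction adapted to this two-species setting with a central type-1 source. I would define $\omega^K(z,n)\in\{0,1\}$ as a measurable function of the Poisson arrivals of the graphical representation (Table \ref{Tab-1}) inside a space-time box $C_{z,n}$ of spatial side of order $L$ centered at $Lz$ and of temporal extent $[(n-1)T,nT]$, chosen so that $C_{z_1,n_1}$ and $C_{z_2,n_2}$ are disjoint whenever $\norm{(z_1,n_1)-(z_2,n_2)}>1$ in $\mathcal G$. A Richardson-type linear-growth bound, analogous to the one invoked at the end of the proof of Proposition \ref{random_walk}, confines the process's influence inside $C_{z,n}$ except on an escape event of probability at most $\exp(-cL)$, on the complement of which the family $\{\omega^K(z,n)\}$ is $1$-dependent. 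I would declare $\{\omega^K(z,n)=1\}$ to be the event that, starting at time $(n-1)T$ from any configuration which is good at every $\mathcal G_K$-parent of $(z,n)$, the configuration at time $nT$ is good at $(z,n)$. With this encoding the inductive coupling $W_n^K\subset X_n^K$ follows immediately from the path definition of wetness, and the proposition reduces to proving $P(\omega^K(z,n)=0)\leq\ep$ for $L$ large.

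The latter estimate splits along the two clauses of Definition \ref{good}. For clause (ii) (each sub-cube $D_w\subset B_z\setminus B_*$ contains at least one type $2$ at time $nT$), the type-$2$ subprocess in the union of the parent boxes and $B_z$ stochastically dominates a basic contact process of rate $\beta_2>\beta_c$ seeded with the type-$2$ particles guaranteed by the goodness of the parents at time $(n-1)T$. Since $T=L^2$ vastly exceeds the relaxation time on the length scale $L^{0.1}$, standard supercritical contact-process estimates (equilibrium convergence together with the exponential ``sausage'' bounds invoked in Section 5 of \cite{neuhauser_1992}) place at least one type $2$ in each $D_w\subset B_z\setminus B_*$ with probability at least $1-\exp(-cL^{0.1})$; a union bound over the $O((L/L^{0.1})^d)$ sub-cubes makes the total failure probability $o(1)$ as $L\to\infty$. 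For clause (i) (no type $1$ in $B_z\setminus B_*$ at time $nT$), the goodness of the parents locates every type-$1$ particle at time $(n-1)T$ inside $B_*$, so any type $1$ present at time $nT$ in $B_z\setminus B_*$ must descend from $B_*$ and propagate a distance of order $L$ through the dense type-$2$ medium.

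The main obstacle is this last type-$1$ confinement over the long time horizon $T=L^2$: a naive positive-velocity interface bound, available because $\beta_2>\beta_1$, controls the frontier of type $1$ only on time scales of order $L$ rather than $L^2$. To upgrade it I would use the competition coupling of \cite{neuhauser_1992}: inside the type-$2$-dense annular region $B_0\setminus B_*$ the type-$1$ offspring of $B_*$ are stochastically dominated by a subcritical branching/contact object whose maximal spatial range up to time $L^2$ has a tail of order $\exp(-cL)$, uniformly in $t$. Combining this confinement bound with the type-$2$ spread estimate yields $P(\omega^K(z,n)=0)\leq\ep$ once $L=L(\ep)$ is large, and the block construction closes.
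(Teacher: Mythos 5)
Your outer framing (a one-step estimate $P((z_2,n+1)\hbox{ good}\mid(z_1,n)\hbox{ good})\geq 1-\ep$ fed into a standard $1$-dependent block comparison) is exactly how the paper reduces the proposition, via \eqref{eq: invasion} and the appendix of \cite{durrett_1995}. The gap is in the one-step estimate itself, specifically in your treatment of clause (i) of Definition \ref{good}. Your premise that ``the goodness of the parents locates every type-$1$ particle at time $(n-1)T$ inside $B_*$'' is false: goodness of $(z_1,n)$ constrains the configuration only on $B_{z_1}\setminus B_*$, and says nothing about $B_{z_2}$ itself or any other $L$-cube of the patch $A_0$. At the start of the step, $B_{z_2}$ may be packed with $1$'s; they do not need to ``propagate a distance of order $L$'' to violate goodness of $(z_2,n+1)$ --- they merely need to persist, and since $\beta_1>\beta_c$ the type-$1$ process in isolation is supercritical and persists happily. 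What must be shown is that the $2$'s entering from the good box squeeze the $1$'s out of $B_{z_2}$ within time $T=L^2$, and this relative (not absolute) subcriticality of type $1$ is precisely the hard content. Your proposed black box --- a ``competition coupling'' dominating the type-$1$ offspring in the $2$-dense medium by a subcritical object with range tail $\exp(-cL)$ uniformly up to time $L^2$ --- does not exist in that form in \cite{neuhauser_1992} and is essentially a restatement of the conclusion; it is also circular, since the $2$-dense medium it presupposes is itself what is being maintained. The paper instead argues backwards in time: for each $x\in B_{z_2}\setminus B_*$ it runs the dual process and a repositioned selected path steered along the segment from $x$ into the good box while avoiding the center $0$ (where spontaneous $1$-births occur), shows this path crosses a renewal associated with a $2$-arrow and lands on a $2$ at the previous level, and concludes via the forbidden-arrow mechanism that $x$ is of type $2$ (estimate \eqref{eq: type2}); a separate estimate \eqref{eq: type1} on the radius of a dying dual process handles sites that do not live forever but might still be reached by a spontaneous birth at $0$. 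Crucially, this duality argument is uniform over the unknown configuration outside the good parent box, which is exactly the uniformity your forward argument lacks.

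A secondary inaccuracy: for clause (ii) you assert that the type-$2$ subprocess dominates a basic contact process with rate $\beta_2$. It does not, because type-$1$ particles occupy sites and block type-$2$ births. The paper sidesteps this by proving only occupancy --- the set of occupied sites of either type dominates a rate-$\beta_1>\beta_c$ contact process, giving \eqref{eq: contact} --- and then combining with the absence of $1$'s from \eqref{eq: cond1} to conclude the occupant of each $D_w$ must be a $2$. Your version could be patched the same way, but as written it again leans on the unproved exclusion of the $1$'s. In short, the architecture is right but the central competition estimate is asserted rather than proved, and the assertion rests on a misreading of what ``good parent'' controls.
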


\noindent In order to establish Proposition \ref{coupling_1}, the objective is to prove that there exists $L = L (\ep)$
 sufficiently large such that the following holds:
\begin{equation}
\label{eq: invasion}
 \begin{array}{l}
  P \,((z_2, n + 1) \hbox{ is good} \ | \ (z_1, n) \hbox{ is good}) \ \geq \ 1 - \ep \vspace{4pt} \\
    \hspace{50pt} \hbox{for all} \ z_1, z_2 \ \hbox{with} \ \norm{z_1 - z_2} = 1 \ \hbox{and} \ (z_1, n), (z_2, n + 1) \in \mathcal G_K.
 \end{array}
\end{equation}
 With inequalities \eqref{eq: invasion} in hands, the proof of Proposition \ref{coupling_1} follows from standard techniques,
 and we refer the reader to the Appendix of \cite{durrett_1995} for more details.
 Since the central site plays a particular role due to the spontaneous births of type 1 particles as pointed out
 in Definition \ref{good}, there are three cases to be considered, as illustrated in Figure \ref{Fig: perco} (the three
 pictures in this figure will be explained later).
 We only prove \eqref{eq: invasion} in the case when $z_2 = 0$ (left picture) which is slightly more complicated than the two
 other cases.
 Using spatial symmetry and homogeneity in time of the evolution rules of the process, it suffices to prove the following

\begin{lemma}
\label{invasion}
 Let $e_1 = (1, 0, \ldots, 0) \in \Z^d$. Then
 $$ P \,((0, 1) \hbox{ is good} \ | \ (e_1, 0) \hbox{ is good}) \ \geq \ 1 - \ep \quad \hbox{for $L$ sufficiently large.} $$
\end{lemma}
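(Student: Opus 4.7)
The plan is to use the hypothesis that $(e_1, 0)$ is good---at time $0$, $B_{e_1}$ contains no type-$1$ particles and every $L^{0.1}$-subcube of $B_{e_1}$ contains at least one type-$2$ particle---together with $\beta_2 > \beta_1 > \beta_c$ to populate $B_0 \setminus B_*$ with $2$'s and exclude $1$'s from this region by time $T = L^2$.

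First I would control the spread of type $2$ from $B_{e_1}$ into $B_0 \setminus B_*$. Using Harris' graphical representation of Table~\ref{Tab-1}, I would compare the set of type-$2$ sites with a single-species contact process of birth rate $\beta_2$. Since $\beta_2 > \beta_c$, this auxiliary process is supercritical. Starting from the assumed density of $2$'s in $B_{e_1}$, standard restart and shape-theorem arguments of the flavor used in the appendix of \cite{durrett_1995} imply that, with probability at least $1 - \epsilon/3$, by time $L^{3/2}$ the auxiliary process has a particle in every $L^{0.1}$-subcube of $B_0 \setminus B_*$ and maintains this property throughout $[L^{3/2}, L^2]$. Because the $1$'s present in the true process compete with $2$'s for empty sites, obstruction of the actual $2$-population by the sparse $1$'s produced in Step $2$ below is negligible on these scales.

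Second, I would exclude $1$'s from $B_0 \setminus B_*$ at time $T$. Spontaneous births are confined to the central vertex (inside $B_*$), so any $1$ appearing in $B_0 \setminus B_*$ must propagate along short edges from $B_*$. A block comparison with Neuhauser's multitype contact process, for which $\beta_2 > \beta_1$ forces $2$ to outcompete $1$ locally (Theorem~$1$ of \cite{neuhauser_1992}), together with the dual hierarchy tools of Section~\ref{Sec: duality}, shows that a $1$-lineage attempting to cross a shell densely populated by $2$'s is killed in bounded time with probability exponentially close to $1$. A union bound over the $O(T)$ Poisson marks potentially creating a $1$ in the shell gives a failure probability at most $2\epsilon/3$ for this step, which combined with Step~$1$ yields the desired bound $\geq 1 - \epsilon$ for $L$ large enough in terms of $\epsilon$.

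The principal obstacle is Step~$2$: making competitive exclusion quantitative enough to hold over the polynomially long time window $T = L^2$ in the presence of a persistent source of $1$'s at the center of the patch. One must use $\beta_2 > \beta_1$ strictly, not merely $\beta_2 > \beta_c$, so that each $1$-lineage trying to escape $B_*$ faces a firewall of $2$'s of thickness $\Omega(L)$ and is killed with probability $1 - e^{-cL}$, and one must argue that the relevant killing events are sufficiently uncorrelated for the union bound across the Poisson marks to remain effective.
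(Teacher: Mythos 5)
Your plan runs forward in time where the paper runs backward, and the forward route has a genuine gap at its foundation. In Step 1 you compare the set of $2$'s with a single-species contact process of rate $\beta_2$, but that comparison is not valid: births of $2$'s onto sites occupied by $1$'s are suppressed and the multitype process is not attractive, so the $2$'s alone do not stochastically dominate any single-type contact process. The obstruction by the $1$'s is not a perturbation you can defer, because under the standing hypothesis $\beta_1 > \beta_c$ the $1$'s emanating from the persistent source at the central vertex are themselves supercritical on empty space; the only thing confining them near $0$ is a dense population of $2$'s. Thus your Step 1 (the $2$'s fill $B_0 \setminus B_*$ provided the $1$'s stay sparse) and your Step 2 (the $1$'s stay sparse provided the $2$'s are dense) each presuppose the other, and nothing in the write-up breaks this circularity. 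Moreover the tool you invoke for Step 2, Theorem 1 of \cite{neuhauser_1992}, is a qualitative, infinite-lattice, $t \to \infty$ statement; it does not supply the quantitative ``firewall'' estimate over the window $[0, L^2]$ that you yourself flag as the principal obstacle. That estimate is the entire content of the lemma, and it is missing. (The only forward comparison the paper does make is for the \emph{union} of both types, which dominates a one-color contact process with rate $\beta_1 > \beta_c$ and gives non-emptiness of each $D_w$; that is legitimate precisely because it ignores types.)

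The paper resolves the competition site by site with a dual construction instead. For each $x \in B_0 \setminus B_*$ whose dual process survives, it builds a dual path $\frak S_s$ that is forbidden for the $1$'s (via renewal points associated with $2$-arrows and the repositioning algorithm of \cite{durrett_neuhauser_1997, lanchier_neuhauser_2006}), steering it first to a corner of $B^* = (-L/3, L/3)^d$ so as to avoid the spontaneous $1$-births at the center, and then to the center $L e_1$ of the good box $B_{e_1}$, where it lands on a $2$ with probability $1 - C \exp(-\gamma L^{0.1})$; this forces $\eta_T^N (x) = 2$. A site whose dual dies out before time $0$ can carry a $1$ only if the dual reaches the center, i.e.\ has radius at least $L/6$, which has exponentially small probability. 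A union bound over the $O(L^d)$ sites then delivers both conditions of Definition \ref{good} simultaneously, with no separate forward growth step and no circularity. To salvage your approach you would need, at minimum, an independent quantitative lemma showing that a $1$-lineage cannot cross an annulus of width $\Omega(L)$ well populated by $2$'s, proved without assuming the conclusion --- which is essentially what the selected-path estimates \eqref{eq: drift_1}--\eqref{eq: drift_4} provide.
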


\begin{table}[t]
\begin{center}
\begin{tabular}{|c|c|c|p{140pt}|}
\hline
 rate & symbols & defined for each \ldots & effect on the configuration \\ \hline
 $\beta_1$ & $x \,\longrightarrow \,y$ &
       edge $(x, y) \in A_0^2$ with $x \sim y$ &
       if $y$ is empty, it becomes of the same type as vertex $x$. \\ \hline
 $\beta_2 - \beta_1$ & $x \,\overset{2}{\longrightarrow} \,y$ &
       edge $(x, y) \in A_0^2$ with $x \sim y$ &
       if $x$ is of type 2 and $y$ is empty, $y$ becomes occupied by a type 2 particle. \\ \hline
  1 & $\times$ at vertex $x$ &
       vertex $x \in \Z^d$ &
       if it exists, the particle at vertex $x$ is killed regardless of its type. \\ \hline
  $2d B_1$ & $\bullet$ \,at vertex 0 &
       vertex 0 only &
       if $x$ is empty, it becomes occupied by a particle of type 1. \\ \hline
\end{tabular}
\end{center}
\caption{\upshape{Harris' graphical representation}}
\label{Tab-2}
\end{table}

\noindent The proof of Lemma \ref{invasion} relies on duality techniques as well.
 In order to define the dual process, the first step is to construct the multitype contact process on $A_0$ graphically from
 collections of independent Poisson processes \cite{harris_1972}.
 As indicated in Table \ref{Tab-2}, these processes are defined for each directed edge $(x, y) \in A_0 \times A_0$ or
 vertex $x \in A_0$.
 Unlabeled arrows, type 2 arrows, and $\times$'s have the same interpretation as in Table \ref{Tab-1} above.
 The additional symbol $\bullet$ indicates a spontaneous birth of type 1 particle at site 0.
 This graphical representation allows us to construct the multitype contact process on $A_0$ starting from any initial
 configuration.

\indent We say that there is a path from $(y, T - s)$ to $(x, T)$, or equivalently that there is a dual path from $(x, T)$
 to $(y, T - s)$, if there are sequences of times and vertices
 $$ s_0 \ = \ T - s \ < \ s_1 \ < \ \cdots \ < \ s_{n + 1} \ = \ T \qquad \hbox{and} \qquad
    x_0 \ = \ y, \,x_1, \,\ldots, \,x_n \ = \ x $$
 such that the following two conditions hold:
\begin{enumerate}
 \item For $i = 1, 2, \ldots, n$, there is an arrow from $x_{i - 1}$ to $x_i$ at time $s_i$ and \vspace{4pt}
 \item For $i = 0, 1, \ldots, n$, the vertical segments $\{x_i \} \times (s_i, s_{i + 1})$ do not contain any $\times$'s.
\end{enumerate}
 Note that, in our definition of path and dual path, $\bullet$'s have no effect, though they are important in the construction
 of the process.
 The dual process starting at space-time point $(x, T)$ is then defined as the set-valued process
 $$ \hat \eta_s^N (x, T) \ = \ \{y \in \Z^d : \hbox{there is a dual path from $(x, T)$ to $(y, T - s)$} \}. $$
 As previously, it is convenient to assume that the Poisson processes in the graphical representation are defined for negative
 times so that the dual process is defined for all $s \geq 0$.
 To deduce the type of the particle at $(x, T)$ from the configuration at earlier times, we define a labeling of the tree
 structure of the dual process, thus inducing an ancestor hierarchy, by using the algorithm introduced
 in Section \ref{Sec: duality}.
 The path of the first ancestor is then constructed by following backwards in time the branch with the largest label.
 The type of $(x, T)$ is determined as follows:
\begin{enumerate}
 \item If the first ancestor crosses at least one $\bullet$ on its way up to $(x, T)$ \ldots
\begin{enumerate}
 \item regardless of the initial configuration, $(x, T)$ is of type 1. \vspace{4pt}
\end{enumerate}
 \item If the first ancestor does not cross any $\bullet$'s on its way up to $(x, T)$ \ldots
\begin{enumerate}
 \item and lands at time 0 on an empty site, the first ancestor does not determine $(x, T)$.
 \item and lands at time 0 on a 1 and that, on its way up to $(x, T)$, the first ancestor does not cross any type 2 arrow
  then $(x, T)$ is of type 1.
 \item and lands at time 0 on a 1 and that, on its way up to $(x, T)$, the first ancestor crosses a type 2 arrow then the
  first ancestor does not determine the type of $(x, T)$.
  We then follow the path of the first ancestor on its way up to $(x, T)$ until the first 2-arrow we encounter and discard
  all the ancestors of the point where this arrow is directed to.
 \item and lands at time 0 on a 2 then $(x, T)$ is of type 2.
\end{enumerate}
\end{enumerate}
 If the first ancestor does not determine the type of $(x, T)$ (2a and 2c above), we look at the next ancestor in the
 hierarchy, and so on.
 Point 1a follows from the fact that a spontaneous birth of type 1 particle occurs along the path of the first ancestor.
 Points 2a-2d are the same as for the multitype contact process and we refer the reader to \cite{neuhauser_1992},
 page 472, for more details on how to determine the type of $(x, T)$ for the process with no spontaneous birth.
 Finally, note that, since the state space of the process is finite,
 $$ \lim_{s \to \infty} \ \hat \eta_s^N (X, \tau) \ = \ \varnothing \qquad \hbox{with probability 1} $$
 which contradicts the definition of ``living forever'' introduced in \eqref{eq: live_1} above.
 In this section, we say that a space-time point $(X, \tau)$ lives forever if
\begin{equation}
\label{eq: live_2}
 \hat \eta_s^N (X, \tau) \ \neq \ \varnothing \qquad \hbox{for all} \ \ s \leq \tau,
\end{equation}
 that is, condition \eqref{eq: live_1} is only satisfied for all $s \leq \tau$.
 Then, whenever the path of the first ancestor jumps to a space-time point that lives forever in the sense
 of \eqref{eq: live_2}, this point is called a renewal point.
 As in Section \ref{Sec: duality}, the sequence of renewal points divides the path of the first ancestor into independent
 and identically distributed pieces.
 We are now ready to prove Lemma \ref{invasion}. \\

\noindent{\scshape Proof of Lemma \ref{invasion}.}
 Let $x \in B_0 \setminus B_*$ and assume that $(x, T)$ lives forever.
 The first step is to prove that there exist $C_3 < \infty$ and $\gamma_3 > 0$ such that, for all $L$ sufficiently large,
\begin{equation}
\label{eq: type2}
  P \,(\eta_T^N (x) \neq 2 \ | \ (e_1, 0) \ \hbox{is good and} \ (x, T) \ \hbox{lives forever}) \ \leq \ C_3 \,\exp (- \gamma_3 L^{0.1}).
\end{equation}
 To prove \eqref{eq: type2}, we will construct a dual path $\frak S_s$ forbidden for the 1's starting at $(x, T)$ and
 ending at time 0 on a site occupied by a type 2 particle.
 The idea is to apply a modification of the so-called repositioning algorithm (for the original version,
 see page 28 in \cite{durrett_neuhauser_1997}).
 We say that a renewal point is associated with a 2-arrow if the first arrow a particle crosses starting at this renewal
 point and moving up the graphical representation is a 2-arrow.
 We call selected path $\frak A_s$ with origin $x$ and target $y \in B_0$ the following dual path:
 The process starts at $\frak A_0 = x$ and follows the path of the first ancestor starting at $(x, T)$ until the first
 time $\sigma_1$ it jumps to a renewal point associated with a 2-arrow.
 Then, we either leave $\frak A_s$ where it is at that time or reposition it.
 To determine whether and where to reposition the selected path, we denote the location of the second ancestor in the
 hierarchy at time $\sigma_1$, provided this ancestor exists, by $\frak B_{\sigma_1}$.
 Let $m$ be a large constant that does not depend on $L$, and let $\Delta$ be the straight line going through $x$ and $y$.
 Also, we denote the Euclidean distance by $\dist (\,\cdot \,, \,\cdot \,)$.
\begin{enumerate}
 \item Assume that $\frak B_{\sigma_1}$ exists and $(\frak B_{\sigma_1}, T - \sigma_1)$ lives forever. Then
\begin{enumerate}
 \item if $\dist (\frak A_{\sigma_1}, \Delta) > m$
      and $\dist (\frak B_{\sigma_1}, \Delta) < \dist (\frak A_{\sigma_1}, \Delta)$
      set $\frak A_{\sigma_1 +} = \frak B_{\sigma_1}$.
 \item if $\dist (\frak A_{\sigma_1}, \Delta) \leq m$
      and $\dist (\frak B_{\sigma_1}, y) < \dist (\frak A_{\sigma_1}, y)$
      set $\frak A_{\sigma_1 +} = \frak B_{\sigma_1}$.
 \item otherwise, we set $\frak A_{\sigma_1 +} = \frak A_{\sigma_1}$. \vspace{4pt}
\end{enumerate}
 \item Assume that $\frak B_{\sigma_1}$ does not exist or $(\frak B_{\sigma_1}, T - \sigma_1)$ does not live forever. Then
\begin{enumerate}
 \item we set $\frak A_{\sigma_1 +} = \frak A_{\sigma_1}$.
\end{enumerate}
\end{enumerate}
 In either case, we start a new dual process at $(\frak A_{\sigma_1 +}, T - \sigma_1)$ and follow the path of its first
 ancestor until the first time $\sigma_2$ it jumps to a renewal point associated with a 2-arrow when we apply again the
 repositioning algorithm, and so on.
 Intuitively, this causes the selected path $\frak A_s$ to drift towards the target $y$ while staying close to the
 straight line $\Delta$.
 More precisely, let $x_1$ and $x_2$ belong to the segment $(x, y)$ in the order indicated in the following picture:
\begin{center}
 \vspace{10pt}
 \scalebox{0.40}{\begin{picture}(0,0)%
\includegraphics{drift.pstex}%
\end{picture}%
\setlength{\unitlength}{3947sp}%
\begingroup\makeatletter\ifx\SetFigFontNFSS\undefined%
\gdef\SetFigFontNFSS#1#2#3#4#5{%
  \reset@font\fontsize{#1}{#2pt}%
  \fontfamily{#3}\fontseries{#4}\fontshape{#5}%
  \selectfont}%
\fi\endgroup%
\begin{picture}(9624,4814)(589,-1568)
\put(4201,2039){\makebox(0,0)[b]{\smash{{\SetFigFontNFSS{20}{24.0}{\familydefault}{\mddefault}{\updefault}$x_1$}}}}
\put(6601,2039){\makebox(0,0)[b]{\smash{{\SetFigFontNFSS{20}{24.0}{\familydefault}{\mddefault}{\updefault}$x_2$}}}}
\put(1201,2039){\makebox(0,0)[b]{\smash{{\SetFigFontNFSS{20}{24.0}{\familydefault}{\mddefault}{\updefault}$x$}}}}
\put(9601,2039){\makebox(0,0)[b]{\smash{{\SetFigFontNFSS{20}{24.0}{\familydefault}{\mddefault}{\updefault}$y$}}}}
\put(3226, 89){\rotatebox{45.0}{\makebox(0,0)[b]{\smash{{\SetFigFontNFSS{20}{24.0}{\familydefault}{\mddefault}{\updefault}$\sqrt L$}}}}}
\put(8401,989){\makebox(0,0)[b]{\smash{{\SetFigFontNFSS{20}{24.0}{\familydefault}{\mddefault}{\updefault}$\Delta$}}}}
\end{picture}%
}
 \vspace{10pt}
\end{center}
 Assume that $\norm{x_1 - x_2} = \sqrt L$ and $\norm{\frak A_{s_0} - x_1} \leq \sqrt L / 4$ at some time $s_0$ and set
 $$ \begin{array}{rcl}
      S_{x_1} & = & \inf \,\{s \geq s_0 : \dist (\frak A_s, x_1) \geq \sqrt L \} \vspace{4pt} \\
      T_{x_2} & = & \inf \,\{s \geq s_0 : \dist (\frak A_s, x_2) < \sqrt L / 4 \}. \end{array} $$
 Then, it can be proved that, for suitable constants $C_4, C_5 < \infty$ and $\gamma_5 > 0$,
\begin{equation}
\label{eq: drift_1}
 P \,(S_{x_1} < T_{x_2} \ \hbox{or} \ T_{x_2} \geq C_4 \sqrt L) \ \leq \ C_5 \,\exp (- \gamma_5 L^{0.2}).
\end{equation}
 The proof of \eqref{eq: drift_1} can be found in \cite{lanchier_neuhauser_2006}, Lemma 3.5.
 That is, starting from the small ball centered at $x_1$ in the picture above, the selected path hits
 the small ball centered at $x_2$ before leaving the large ball centered at $x_1$, this takes less than $C_4 \sqrt L$
 units of time with probability close to 1 when the parameter $L$ is large.
 In particular, if we let
 $$ \begin{array}{rcl}
      S_{\Delta} & = & \inf \,\{s \geq 0 : \dist (\frak A_s, \Delta) \geq \sqrt L \ \hbox{or} \ \dist (\frak A_s, 0) > 2 L \} \vspace{4pt} \\
      T_y & = & \inf \,\{s \geq 0 : \dist (\frak A_s, y) < \sqrt L / 4 \}
    \end{array} $$
 apply \eqref{eq: drift_1} consecutively and use that $\dist (x, y) \leq L \sqrt d$, we obtain
\begin{equation}
\label{eq: drift_2}
  P \,(S_{\Delta} < T_y \ \hbox{or} \ T_y \geq C_4 L \sqrt d) \ \leq \ C_5 \sqrt d \sqrt L \,\exp (- \gamma_5 L^{0.2}).
\end{equation}
 To construct $\frak S_s$, we let $y$ be the corner of $B^* = (- L / 3, L / 3)^d$ closest to $x$ and
 $z = L e_1$ be the center of patch $B_{e_1}$ (see Figure \ref{Fig: perco}).
 The dual path $\frak S_s$ starts at $\frak S_0 = x$, follows the selected path with target $y$ until $T_y$ when
 it hits the Euclidean ball with center $y$ and radius $\sqrt L / 4$, then follows the selected path with target
 $z$ until time $\tau = T - \sqrt L$.
 By \eqref{eq: drift_2},
\begin{equation}
\label{eq: drift_3}
  P \,(\dist (\frak S_s, 0) < \sqrt L \ \hbox{for some} \ s \leq \tau \ \
    \hbox{or} \ \dist (\frak S_{\tau}, z) \geq \sqrt L) \ \leq \ C_6 \,\exp (- \gamma_6 L^{0.2})
\end{equation}
 for suitable $C_6 < \infty$ and $\gamma_6 > 0$.
 In other respects, since $(e_1, 0)$ is good, a straightforward application of Lemma 3.9 in \cite{lanchier_neuhauser_2006}
 implies that
\begin{equation}
\label{eq: drift_4}
\begin{array}{l}
  P \,((\frak S_{\tau}, \sqrt L) \ \hbox{is not occupied by a 2} \ | \dist (\frak S_{\tau}, z) < \sqrt L \vspace{4pt} \\
    \hspace{100pt} \hbox{and} \ (e_1, 0) \ \hbox{is good}) \ \leq \ C_7 \,\exp (- \gamma_7 L^{0.1}) \end{array}
\end{equation}
 for suitable $C_7 < \infty$ and $\gamma_7 > 0$.
 Finally, using the duality properties described above and the fact that 2-arrows are forbidden for the 1's, and
 combining \eqref{eq: drift_3} and \eqref{eq: drift_4}, we obtain
 $$ \begin{array}{l}
  P \,(\eta_T^N (x) \neq 2 \ | \ (e_1, 0) \ \hbox{is good and} \ (x, T) \ \hbox{lives forever}) \vspace{4pt} \\ \hspace{50pt} \leq \
  P \,((\frak S_{\tau}, \sqrt L) \ \hbox{is not occupied by a 2} \ | \ (e_1, 0) \ \hbox{is good}) \vspace{4pt} \\ \hspace{100pt} + \
  P \,(\dist (\frak S_s, 0) < \sqrt L \ \hbox{for some} \ s \leq \tau) \ \leq \ C_3 \,\exp (- \gamma_3 L^{0.1}) \end{array} $$
 for suitable $C_3 < \infty$ and $\gamma_3 > 0$ and all $L$ sufficiently large, which establishes \eqref{eq: type2}.
 The second step is to prove that
\begin{equation}
\label{eq: type1}
  P \,(\eta_T^N (x) = 1 \ | \ (x, T) \ \hbox{does not live forever}) \ \leq \ C_8 \,\exp (- \gamma_8 L)
\end{equation}
 for appropriate $C_8 < \infty$ and $\gamma_8 > 0$.
 Note that, on the event that $(x, T)$ does not live forever, duality implies that the probability in \eqref{eq: type1}
 is equal to 0 for the multitype contact process with no spontaneous birth of type 1 particles.
 In our case, due to the presence of spontaneous births at the central vertex 0, we need to bound the probability that the dual
 process starting at $(x, T)$ hits 0.
 More precisely, since $\norm{x - 0} \geq L / 6$, \eqref{eq: type1} follows from
 $$ \begin{array}{l}
  P \,(0 \in \hat \eta_s^N (x, T) \ \hbox{for some} \ s \leq T \ | \ \hat \eta_T^N (x, T) = \varnothing) \vspace{4pt} \\ \hspace{40pt} \leq \
  P \,(\hbox{the dual process starting at $(x, T)$ has radius} \vspace{4pt} \\
    \hspace{80pt} \hbox{at least $L / 6$} \ | \ (x, T) \ \hbox{does not live forever}) \ \leq \ C_8 \,\exp (- \gamma_8 L)
\end{array} $$
 which is a well-known property of the contact process.
 The proof follows from the analogous result for oriented percolation (see \cite{durrett_1984}, Section 12) and the fact that the
 supercritical contact process viewed on suitable length and time scales dominates oriented
 percolation (see \cite{bezuidenhout_grimmett_1990}).
 Finally, combining \eqref{eq: type2} and \eqref{eq: type1}, we obtain
\begin{equation}
\label{eq: cond1}
 \begin{array}{l}
  P \,(\eta_T^N (x) = 1 \ \hbox{for some} \ x \in B_0 \setminus B_* \ | \ (e_1, 0) \ \hbox{is good}) \vspace{4pt} \\ \hspace{50pt}
    \leq \ C_3 L^d \,\exp (- \gamma_3 L^{0.1}) \ + \ C_8 L^d \,\exp (- \gamma_8 L) \ \leq \ C_9 \,\exp (- \gamma_9 L^{0.1}) \end{array}
\end{equation}
 for suitable $C_9 < \infty$ and $\gamma_9 > 0$, and all $L$ sufficiently large.
 That is, condition 1 in Definition \ref{good} holds with probability arbitrarily close to 1.
 Now, let $D_w \subset B_0 \setminus B_*$.
 Since the process dominates a one-color contact process with parameter $\beta_1 > \beta_c$, we have
\begin{equation}
\label{eq: contact}
 P \,(\eta_T^N (x) = 0 \ \hbox{for all} \ x \in D_w \ | \ (e_1, 0) \ \hbox{is good}) \ \leq \ C_{10} \,\exp (- \gamma_{10} L^{0.1})
\end{equation}
 for appropriate $C_{10} < \infty$ and $\gamma_{10} > 0$.
 Combining \eqref{eq: cond1} and \eqref{eq: contact} implies that
 $$ \begin{array}{l}
  P \,(\eta_T^N (x) \neq 2 \ \hbox{for all} \ x \in D_w \ | \ (e_1, 0) \ \hbox{is good}) \vspace{4pt} \\ \hspace{40pt} \leq \
  P \,(\eta_T^N (x) = 1 \ \hbox{for some} \ x \in D_w \ | \ (e_1, 0) \ \hbox{is good}) \vspace{4pt} \\ \hspace{80pt} + \
  P \,(\eta_T^N (x) = 0 \ \hbox{for all} \ x \in D_w \ | \ (e_1, 0) \ \hbox{is good}) \vspace{4pt} \\ \hspace{40pt} \leq \
  C_9 \,\exp (- \gamma_9 L^{0.1}) \ + \ C_{10} \,\exp (- \gamma_{10} L^{0.1}). \end{array} $$
 In particular, there exist $C_{11} < \infty$ and $\gamma_{11} > 0$ such that
\begin{equation}
\label{eq: cond2}
 \begin{array}{l}
   P \,(\hbox{there exists} \ D_w \subset B_0 \setminus B_* : \eta_T^N (x) \neq 2 \
     \hbox{for all} \ x \in D_w \ | \ (e_1, 0) \ \hbox{is good}) \vspace{4pt} \\ \hspace{40pt} \leq \
   C_9 L^d \,\exp (- \gamma_9 L^{0.1}) \ + \ C_{10} L^d \,\exp (- \gamma_{10} L^{0.1}) \ \leq \
   C_{11} \,\exp (- \gamma_{11} L^{0.1}) \end{array}
\end{equation}
 for $L$ sufficiently large.
 The lemma then follows from \eqref{eq: cond1} and \eqref{eq: cond2}.
\hspace{2mm} $\square$ \\

\begin{figure}[t]
\centering
\scalebox{0.45}{\input{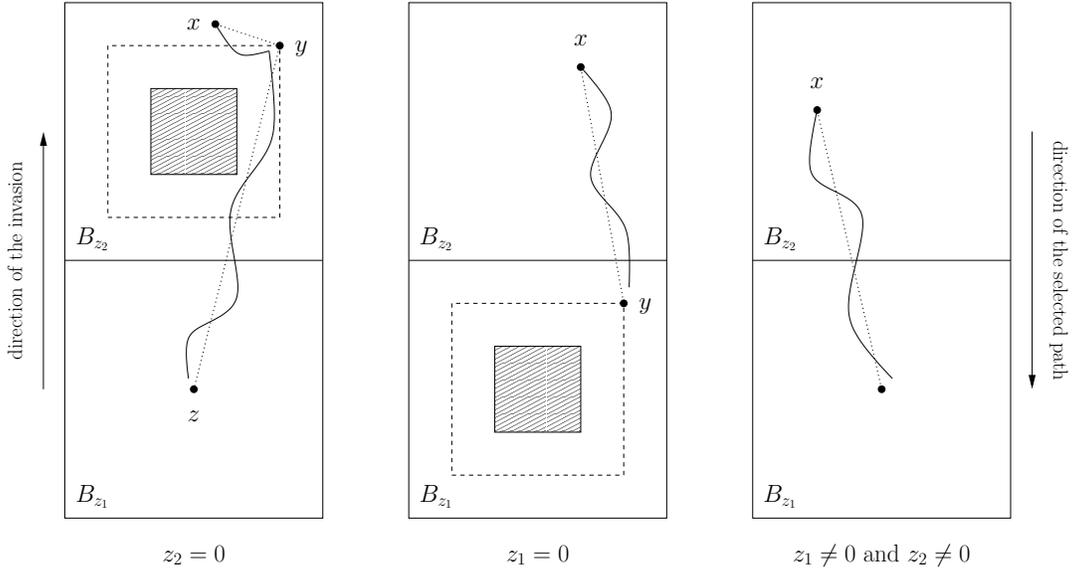}}
\caption{\upshape Picture of the selected path.}
\label{Fig: perco}
\end{figure}

\noindent From Lemma \ref{invasion}, it is easy to deduce \eqref{eq: invasion} when $z_2 = 0$.
 The left picture of Figure \ref{Fig: perco} gives a schematic illustration of the dual path $\frak S_s$ in continuous line.
 The proof of \eqref{eq: invasion} when $z_1 = 0$ and when both $z_1$ and $z_2$ are $\neq 0$ is similar and we refer to the
 last two pictures in Figure \ref{Fig: perco} for an illustration of a suitable dual path in these two cases.
 Note that the case $z_2 = 0$ is slightly more complicated since the repositioning algorithm has to be applied in two different
 directions in order to avoid 0 with high probability.
 With Proposition \ref{coupling_1} in hands, we are now ready to deduce useful properties of the multitype contact process
 restricted to $A_0$ from analogous properties of the oriented percolation process on $\mathcal G_K$.
 We let
 $$ \tau_N \ = \ \inf \,\{t \geq 0 : \eta_t^N (x) \neq 2 \ \hbox{for all} \ x \in A_0 \}. $$
 denote the extinction time of type 2 particles.
 We denote by $P_0$ the conditional probability given the event that vertex 0 (and only vertex 0) is occupied by a
 type 2 particle at time 0.
 In the next lemma, we prove that, starting with a single 2 at the center of the patch, with high probability, the process may
 exhibit only two extreme behaviors: either the 2's spread out successfully and the time to extinction is arbitrarily large
 when $N$ is large, or they die out quickly.

\begin{lemma}
\label{time}
 Let $\beta_2 > \beta_1 > \beta_c$ and $\mathcal I_K = \exp (c K)$.
 Then, there exists $c > 0$ such that
 $$ P_0 \,(t < \tau_N < 3 \mathcal I_K) \ \leq \ C_{12} \,\exp (- \gamma_{12} K) \ + \ C_{13} \,\exp (- \gamma_{13} t) $$
 for all $K$ sufficiently large and suitable constants $C_{12}, C_{13} < \infty$ and $\gamma_{12}, \gamma_{13} > 0$.
\end{lemma}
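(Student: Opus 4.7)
\noindent The plan is to establish the claimed dichotomy by combining a short-time \emph{establishment} step with a long-time \emph{survival} estimate for the oriented percolation coupled to the process via Proposition \ref{coupling_1}. For the establishment step, I would show that there is a stopping time $\sigma$ with $P_0(\sigma > t) \leq C \exp(-\gamma t)$ such that by time $\sigma$ either all type 2 particles are extinct, or the block configuration associated with a good site in the sense of Definition \ref{good} has emerged at some level $n_0$ with $n_0 T \leq \sigma$. The argument parallels the Bezuidenhout--Grimmett establishment lemma: type 2 particles dominate a supercritical contact process with rate $\beta_2 > \beta_c$ in regions free of 1's, and the invasion techniques of Lemma \ref{invasion} (the repositioning of dual paths, together with the fact that 2-arrows are forbidden for the 1's) show that starting from a single 2 the type 2's either go extinct in $O(L^2)$ time or successfully clear and colonize a full $L$-box away from the central region in time $O(L^2)$. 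The standard fact that on the extinction event the survival time of a supercritical contact process has exponentially decaying tail then yields the term $C_{13}\exp(-\gamma_{13} t)$.

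Once a good site has been produced by time $\sigma$, Proposition \ref{coupling_1} couples the subsequent set of good sites with the wet set of a $1$-dependent supercritical oriented percolation on the finite graph $\mathcal{G}_K$ starting from a non-empty initial configuration. The second ingredient is the standard fact that such an oriented percolation process on a $d$-dimensional box of side $K$ has extinction time at least $\exp(c_1 K)$ with probability at least $1 - C \exp(-c_2 K)$, for positive constants $c_1, c_2$ depending only on the percolation parameter. This can be proved by a Peierls-type contour argument, or equivalently by iterating the block estimate ensuring that the wet region spreads across a constant fraction of $(-K/2, K/2)^d$ within a deterministic number of levels with failure probability $O(\exp(-cK))$; see the appendix of \cite{durrett_1995}. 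Choosing $c < c_1$ so that $3\mathcal{I}_K = 3\exp(cK) < \exp(c_1 K)$ for $K$ large, the oriented percolation survives beyond level $\mathcal{I}_K / T$ with probability at least $1 - C \exp(-c_2 K)$, hence $\tau_N > 3 \mathcal{I}_K$ on the same event, contributing the $C_{12}\exp(-\gamma_{12} K)$ term.

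Union-bounding the two failure events, namely $\{\sigma > t\}$ and $\{\sigma \leq t$ but the OP dies before level $\mathcal{I}_K/T\}$, yields the desired estimate. The main obstacle is the establishment step: the type 2 dynamics is not a pure contact process, since it competes with type 1 particles and with the spontaneous births at the origin, so one must show carefully that from a single 2 either the 2's are quickly extinguished or they invade and clear an entire $L$-box sufficiently far from the center, producing a bona fide good site in the sense of Definition \ref{good}. Assembling the ingredients of Lemma \ref{invasion} to handle this from an arbitrary small seed, and controlling the time to establishment by an exponentially decaying tail in $t$ uniformly in $L$, is the delicate part of the argument; everything else reduces to known facts about supercritical oriented percolation in a box.
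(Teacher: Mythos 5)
There is a genuine gap in your second step, and it is precisely the point the lemma is designed to handle. You invoke as a ``standard fact'' that a supercritical $1$-dependent oriented percolation on $\mathcal G_K$ started from an arbitrary non-empty initial configuration survives for at least $\exp(c_1 K)$ levels with probability $1 - C\exp(-c_2 K)$. This is false when the initial wet set is a single site (which is exactly your situation, since your establishment step produces one good site): from a single seed the process dies out with probability bounded away from $0$ uniformly in $K$, e.g.\ because all neighboring sites at the next level may be closed. Consequently your union bound over $\{\sigma>t\}$ and $\{\sigma\le t \hbox{ and the OP dies before level } 3\mathcal I_K/T\}$ only yields a constant, not $C_{12}\exp(-\gamma_{12}K)+C_{13}\exp(-\gamma_{13}t)$. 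The correct statement is a \emph{dichotomy} (Durrett--Schonmann): the restricted process either dies out quickly, with a tail exponential in the elapsed time, or survives for $\exp(cK)$ levels; the exponential-in-$K$ survival bound is only available once the wet set is of size of order $K$. Your sketch never produces such a large wet set, so the argument does not close.

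The paper's proof is organized exactly around this obstruction. It decomposes according to the event $\Omega_K$ that the set of good sites ever leaves $\mathcal G_{K-2}$. On $\Omega_K$, the shape theorem for supercritical oriented percolation forces $\card X_{n_0}^K \ge \gamma_{14}K$ at the first exit level $n_0$ (up to an exponentially small error), and only then does Theorem 2 of \cite{durrett_schonmann_1988} give survival for $3T^{-1}\exp(cK)$ further levels with probability $1-C\exp(-\gamma K)$; this produces the $C_{12}\exp(-\gamma_{12}K)$ term. On $\Omega_K^c$, the restricted and unrestricted percolation processes are shown to coincide, so the event $\{\tau_N>t\}$ is controlled by $P(m<\pi<\infty)\le C\exp(-\gamma m)$ for the \emph{unrestricted} supercritical process (\cite{durrett_1984}, p.~1031), producing the $C_{13}\exp(-\gamma_{13}t)$ term with no establishment step needed. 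So your first step (the Bezuidenhout--Grimmett-style establishment time $\sigma$, which you yourself flag as the delicate part) is not actually the missing ingredient: the paper avoids it entirely, and the real work is in extracting a wet set of size $\gtrsim K$ before applying the finite-box survival estimate. To repair your argument you would need to replace your ``standard fact'' by the two-case analysis above, at which point you have essentially reconstructed the paper's proof.
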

\begin{proof}
 The proof is divided into two steps, both relying on Proposition \ref{coupling_1}.
 The idea is to decompose the event of interest according to whether the event
 $$ \Omega_K \ = \ \{(X_n^K, n) \not \subset \mathcal G_{K - 2} \ \hbox{for some} \ n \geq 0 \} $$
 occurs or not.
 The occurrence of $\Omega_K$ can be thought of as a successful invasion of type 2.
 We will prove that (i) type 2 particles live an exponentially long time on the event $\Omega_K$, while (ii) they die out quickly
 on the complement of $\Omega_K$. \\
\indent (i) The event $\Omega_K$ occurs.
 In this case, we will prove that
\begin{equation}
\label{eq: i}
 P_0 \,(\Omega_K \ \hbox{and} \ \tau_N < 3 \mathcal I_K) \ \leq \ C_{12} \,\exp (- \gamma_{12} K)
\end{equation}
 for suitable constants $C_{12} < \infty$ and $\gamma_{12} > 0$. Let
 $$ n_0 \ = \ \min \,\{n \geq 0 : (X_n^K, n) \not \subset \mathcal G_{K - 2} \}. $$
 Since the event $\Omega_K$ occurs and the set $X_n^K$ dominates the set of wet sites of a supercritical percolation process,
 there is an in-all-direction expanding region centered at 0 which contains a positive density of good sites.
 Since the correlation between two sites decays exponentially with the distance, the configurations in two $L \times L$ squares
 with no corner in common are almost independent when the parameter $L$ is large, which, together with large deviation estimates
 for the Binomial distribution, implies the existence of a constant $\gamma_{14} > 0$ such that, for $K$ large,
\begin{equation}
\label{eq: slow_1}
 P_0 \,(\Omega_K \ \hbox{and} \ \card X_{n_0}^K < \gamma_{14} K) \ \leq \ C_{15} \,\exp (- \gamma_{15} K)
\end{equation}
 for suitable $C_{15} < \infty$ and $\gamma_{15} > 0$.
 Now, assume that $\card W_{n_0}^K \geq \gamma_{14} K$ and let
 $$ \pi_K \ = \ \inf \,\{n \geq 0 : W_n^K = \varnothing \} $$
 denote the extinction level of the percolation process restricted to $\mathcal G_K$.
 Theorem 2 in \cite{durrett_schonmann_1988} implies that there exists $c > 0$, fixed from now on, such that, for $K$
 sufficiently large,
\begin{equation}
\label{eq: slow_2}
 P \,(\pi_K < 3 T^{-1} \exp (c K) \ \hbox{and} \ \card W_{n_0}^K \geq \gamma_{14} K) \ \leq \ C_{16} \,\exp (- \gamma_{16} K)
\end{equation}
 for suitable $C_{16} < \infty$ and $\gamma_{16} > 0$.
 Combining estimates \eqref{eq: slow_1} and \eqref{eq: slow_2} with the coupling provided in Proposition \ref{coupling_1}
 implies that
 $$ \begin{array}{l}
  P_0 \,(\Omega_K \ \hbox{and} \ \tau_N < 3 \mathcal I_K) \ \leq \
  P_0 \,(\Omega_K \ \hbox{and} \ \card X_{n_0}^K < \gamma_{14} K) \vspace{4pt} \\ \hspace{20pt} + \
  P_0 \,(\tau_N < 3 \mathcal I_K \ \hbox{and} \ \card X_{n_0}^K \geq \gamma_{14} K) \ \leq \
  P_0 \,(\Omega_K \ \hbox{and} \ \card X_{n_0}^K < \gamma_{14} K) \vspace{4pt} \\ \hspace{20pt} + \
  P \,(\pi_K < 3 T^{-1} \mathcal I_K \ \hbox{and} \ \card W_{n_0}^K \geq \gamma_{14} K) \ \leq \
  C_{15} \,\exp (- \gamma_{15} K) \ + \ C_{16} \,\exp (- \gamma_{16} K). \end{array} $$
 This completes the proof of \eqref{eq: i}. \\
\indent (ii) The event $\Omega_K$ does not occur.
 In this case, we will prove that
\begin{equation}
\label{eq: ii}
 P_0 \,(\Omega_K^c \ \hbox{and} \ \tau_N > t) \ \leq \ C_{13} \,\exp (- \gamma_{13} t)
\end{equation}
 for suitable constants $C_{13} < \infty$ and $\gamma_{13} > 0$.
 By Proposition \ref{coupling_1} it suffices to prove the analogous result for the oriented percolation process, namely
\begin{equation}
\label{eq: fast}
  P \,((W_n^K, n) \subset \mathcal G_{K - 2} \ \hbox{for all} \ n \geq 0 \ \hbox{and} \ \pi_K > m \ | \ W_0^K = \{0 \}) \
    \leq \ C_{17} \,\exp (- \gamma_{17} m)
\end{equation}
 for some $C_{17} < \infty$ and $\gamma_{17} > 0$ where $\pi_K$ is the extinction level.
 To establish \eqref{eq: fast}, we couple the restricted and unrestricted percolation processes by letting
\begin{equation}
\label{eq: coupling}
  W_0 \ = \ W_0^K \ = \ \{0 \} \quad \hbox{and} \quad \omega (z, n) \ = \ \omega^K (z, n) \quad \hbox{for all} \ (z, n) \in \mathcal G_K.
\end{equation}
 Assume that there exists $x \in W_n \setminus W_n^K$.
 This together with \eqref{eq: coupling} implies that any open path ending at site $(x, n)$ for the unrestricted percolation
 process leaves the set $\mathcal G_K$.
 Moving up along such a path, we denote by $(x_0, n_0)$ the first site outside $\mathcal G_{K - 2}$ we encounter.
 Using \eqref{eq: coupling} again, it is easy to see that there is an open path ending at $(x_0, n_0)$ for the restricted percolation
 process which implies that $(W_n^K, n) \not \subset G_{K - 2}$ for some $n \geq 0$.
 In conclusion,
\begin{equation}
\label{eq: contradiction}
   W_n \ \neq \ W_n^K \quad \hbox{for some} \ n \geq 0 \quad \Longrightarrow \quad
  (W_n^K, n) \,\not \subset \,\mathcal G_{K - 2} \quad \hbox{for some} \ n \geq 0.
\end{equation}
 Using the reverse of \eqref{eq: contradiction}, we can bound the left-hand side of \eqref{eq: fast} by
 $$ \begin{array}{l}
  P \,((W_n, n) \subset \mathcal G_{K - 2} \ \hbox{for all} \ n \geq 0 \ \hbox{and} \ \pi > m \ | \ W_0 = \{0 \}) \vspace{4pt} \\ \hspace{60pt} \leq \
  P \,(W_n = \varnothing \ \hbox{for some} \ n \geq 0 \ \hbox{and} \ \pi > m \ | \ W_0 = \{0 \}) \vspace{4pt} \\ \hspace{120pt} = \
  P \,(m < \pi < \infty \ | \ W_0 = \{0 \}) \ \leq \ C_{17} \,\exp (- \gamma_{17} m) \end{array} $$
 where $\pi = \inf \,\{n \geq 0 : W_n = \varnothing \}$ which is a well-known property of supercritical oriented percolation
 processes (see page 1031 in \cite{durrett_1984}).
 This establishes \eqref{eq: fast} and \eqref{eq: ii}. \\
\indent We conclude by noticing that
 $$ \begin{array}{rcl}
  P_0 \,(t < \tau_N < 3 \mathcal I_K) & \leq &
  P_0 \,(\Omega_K \ \hbox{and} \ \tau_N < 3 \mathcal I_K) \ + \ P_0 \,(\Omega_K^c \ \hbox{and} \ \tau_N > t) \vspace{4pt} \\ & \leq &
  C_{12} \,\exp (- \gamma_{12} K) \ + \ C_{13} \,\exp (- \gamma_{13} t). \end{array} $$
 This completes the proof.
\end{proof}

\noindent Due to spontaneous births, the region near vertex 0 is mostly occupied by 1's.
 The next two lemmas show however that, provided the 2's invade the patch successfully, the amount of time vertex 0 is occupied by a
 type 2 particle grows exponentially with the size of the patch.

\begin{lemma}
\label{center}
 There exists $p_1 > 0$ that only depends on $L$ such that
 $$ P \,(\eta_t^N (0) = 2 \ \hbox{for all} \ t \in (n T + 1, n T + 2) \ | \ (0, n) \ \hbox{is a good site}) \ \geq \ 2 p_1. $$
\end{lemma}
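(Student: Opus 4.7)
The plan is to exhibit a positive-probability event $E$ in the Harris graphical representation, depending only on Poisson marks in a bounded neighborhood of $0$ during the interval $(nT, nT+2)$, that forces $\eta_t^N(0) = 2$ for all $t \in (nT+1, nT+2)$. Since $p_1$ is allowed to depend on $L$ and on the fixed rates, crude bounds will suffice.

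I would first set up the geometry. Because $(0,n)$ is good, Definition \ref{good} guarantees that every small subcube $D_w \subset B_0 \setminus B_*$ contains at least one $2$ at time $nT$, and that $B_0 \setminus B_*$ is void of $1$'s. Fix once and for all a subcube $D_{w^*}$ adjacent to $B_*$, a vertex $y \in D_{w^*}$ with $\eta_{nT}^N(y) = 2$, and a microscopic lattice path $\gamma = (y = z_0, z_1, \ldots, z_k = 0)$ of length $k \leq dL/2$. I would then partition $(nT, nT + 1/2)$ into a cleansing window $(nT, nT + \eta)$ with $\eta = 1/(4k)$ followed by $k$ consecutive cascade windows $I_1, \ldots, I_k$ of length $\eta$ each. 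The event $E$ asks, on disjoint pieces of the independent Poisson system: (a) during the cleansing window, a $\times$ at each $z_i$ with $i \geq 1$, no $\times$ at $y$, and no arrow from outside $\gamma$ pointing into any $z_i$; (b) during $I_i$, a single arrow $z_{i-1} \to z_i$ (unlabeled or labeled $2$, either transports a $2$ to an empty target) followed by no death mark on $z_i$ and no competing arrow into $z_i$ for the rest of $I_i$; and (c) during $(nT + (k+1)\eta, nT + 2)$, no $\times$ at $0$. Under $E$, the cascade carries the $2$ at $y$ one step per window, reaching $0$ by time $nT + (k+1)\eta \leq nT + 1/2$, and clause (c) keeps $0$ of type $2$ until $nT + 2$; the spontaneous births at $0$ of Table \ref{Tab-2} are harmless because they only act on an empty site, while $0$ carries a $2$ throughout $(nT + (k+1)\eta, nT + 2)$.

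Each atomic requirement in (a), (b), (c) concerns a fixed finite set of vertices and edges and a time window of length bounded below by a function of $L$, so its probability is bounded below by a positive constant depending only on $L$ and the fixed rates $\beta_1, \beta_2, B_1, \delta_1, \delta_2$. Since there are $O(L)$ such atomic requirements, their product yields $P(E \mid (0,n) \text{ good}) \geq p_1(L) > 0$ uniformly in $N$, which gives the lemma after adjusting the constant by a factor of two. The delicate point I anticipate is the cleansing phase: the interior vertices $z_1, \ldots, z_{k-1}$ may lie in $B_*$, where the initial configuration is unconstrained, so $1$'s could be present and would otherwise block the cascade; the cleansing $\times$'s remove them, and the simultaneous exclusion of incoming arrows from outside $\gamma$ during both the cleansing and the cascade windows prevents re-invasion from the ambient configuration. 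This exclusion is the main technical load, but since the number of edges to control grows only linearly in $L$ and each carries a Poisson process of fixed rate on an interval of length $\eta(L)$, the estimate remains positive, which is precisely why $p_1$ is permitted to depend on $L$.
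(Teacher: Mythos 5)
Your overall strategy is exactly the paper's: the paper simply observes that the set $C$ of configurations on $B_0$ making $(0,n)$ good is finite, asserts for each $\eta \in C$ a positive-probability event $E_\eta$ in the graphical representation on $B_0 \times (nT, nT+2)$ forcing $\eta_t^N(0)=2$ on $(nT+1,nT+2)$, uses independence of $E_\eta$ from $\mathcal F_{nT}$, and takes $\inf_{\eta \in C} P(E_\eta) > 0$, a bound depending only on $L$. You instantiate the forcing event explicitly, which the paper does not bother to do, and your accounting of why the bound depends only on $L$ (finitely many configurations and paths, fixed rates, bounded space-time box) is the same.

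There is, however, one step in your explicit event that would fail as written: your claim that the spontaneous-birth marks $\bullet$ at vertex $0$ are ``harmless because they only act on an empty site.'' That is true \emph{after} the cascade has delivered a $2$ to $0$, but not before. Your cleansing window places a $\times$ at $z_k = 0$, leaving $0$ empty; during the subsequent windows $I_1,\dots,I_{k-1}$ your event controls only marks at the current vertex $z_i$, so a $\bullet$ at $0$ (rate $2dB_1$) can install a $1$ there, and then the final arrow $z_{k-1}\to 0$ is suppressed because births onto occupied sites are suppressed. The event as stated therefore does not force the conclusion. The repair is immediate: add to $E$ the requirement that no $\bullet$ occurs at $0$ during $(nT, nT+(k+1)\eta)$, which costs a factor of at least $\exp(-dB_1)$ and preserves positivity. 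A similar (equally fixable) ordering issue lurks in the cleansing window: an unlabeled arrow $z_{i+1}\to z_i$ firing after the $\times$ at $z_i$ but before the $\times$ at $z_{i+1}$ can re-seed a $1$ on the path, so you should either forbid all arrows into $\gamma\setminus\{z_0\}$ throughout the cleansing window or demand that all cleansing $\times$'s occur before any arrow touching $\gamma$. With these two additions the event genuinely forces $\eta_t^N(0)=2$ on $(nT+1,nT+2)$ and the lemma follows as you argue.
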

\begin{proof}
 Let $C$ be the (finite) set of the configurations restricted to $B_0$ such that $(0, n)$ is a good site and, for each
 configuration $\eta \in C$, let $E_{\eta}$ be the set of the realizations of the graphical representation restricted
 to the space-time box $B_0 \times (n T, n T + 2)$ such that
 $$ \{\eta_{nT}^N \equiv \eta \ \hbox{on} \ B_0 \} \,\cap \,E_{\eta} \ \subset \
    \{\eta_t^N (0) = 2 \ \hbox{for all} \ t \in (n T + 1, n T + 2) \}. $$
 Since the events $\{\eta_{nT}^N \equiv \eta \ \hbox{on} \ B_0 \}$ and $E_{\eta}$ are independent, we have
 $$ \begin{array}{l}
  P \,(\eta_t^N (0) = 2 \ \hbox{for all} \ t \in (n T + 1, n T + 2) \ | \ \eta_{nT}^N \equiv \eta \ \hbox{on} \ B_0)
    \vspace{4pt} \\ \hspace{100pt} \
    \geq \ P \,(E_{\eta} \ | \ \eta_{nT}^N \equiv \eta \ \hbox{on} \ B_0) \ = \ P \,(E_{\eta}) \ > \ 0. \end{array} $$
 Using that $C$ is finite, we obtain
 $$ P \,(\eta_t^N (0) = 2 \ \hbox{for all} \ t \in (n T + 1, n T + 2) \ | \ (0, n) \ \hbox{is a good site}) \ \geq \
  \inf_{\eta \in C} \,P^N (E_{\eta}) \ > \ 0. $$
 Finally, since the events $E_{\eta}$ are measurable with respect to the graphical representation restricted to the space-time box
 $B_0 \times (n T, n T + 2)$, the bound $\inf_{\eta \in C} P \,(E_{\eta})$ only depends on $L$.
\end{proof}

\begin{lemma}
\label{occupation}
 Let $\beta_2 > \beta_1 > \beta_c$ and $\mathcal I_K = \exp (c K)$ as in Lemma \ref{time}.
 Also, denote by $\leb$ the Lebesgue measure on the real line.
 Then, for any $s \in (0, \mathcal I_K)$ and $K$ large,
 $$ P_0 \,(\leb \,\{t \in (s, s + \mathcal I_K) : \eta_t^N (0) = 2 \} < K^{-1} \mathcal I_K \ | \ \tau_N \geq 3 \mathcal I_K) \ \leq \ C_{18} \,\exp (- \gamma_{18} K) $$
 for suitable constants $C_{18} < \infty$ and $\gamma_{18} > 0$.
\end{lemma}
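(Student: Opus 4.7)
The plan is to partition the window $(s, s + \mathcal I_K)$ into disjoint time blocks of length $T = L^2$, show that on the conditioning event a positive fraction of these blocks start at a good site $(0, n)$, then use Lemma \ref{center} together with a concentration estimate to extract from each such block a unit sub-interval on which vertex $0$ is in state $2$. Throughout, $L$ is fixed by $\ep$ and plays the role of a constant, while $K$ tends to infinity, so $\mathcal I_K / T \gg K$.

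Let $I = \{n \in \Z_+ : n T \in (s, s + \mathcal I_K - 2) \}$ so that $\card I$ is comparable to $L^{-2} \mathcal I_K$. The first task is to lower bound the number of $n \in I$ for which $(0, n)$ is a good site of the multitype contact process. From the proof of Lemma \ref{time} (parts (i)--(ii) combined with Proposition \ref{coupling_1}), on the event $\{\tau_N \geq 3 \mathcal I_K\}$ and off an exponentially small event in $K$, the event $\Omega_K$ occurs and the restricted percolation process survives at least until level $3 T^{-1} \mathcal I_K$. Standard density results for supercritical oriented percolation restricted to a large box, together with the exponential decay of correlations in the coupled percolation process, then yield
$$ P_0 \,(\card \{n \in I : (0, n) \ \hbox{is a good site} \} < c_1 \,\card I \ | \ \tau_N \geq 3 \mathcal I_K) \ \leq \ C \,\exp (- \gamma K) $$
for some $c_1 > 0$ and all $K$ sufficiently large.

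The second task is to translate the density of good sites into Lebesgue measure of type-$2$ occupation at the origin. For $n \in I$, let $\xi_n = \ind \{\eta_t^N (0) = 2 \ \hbox{for all} \ t \in (nT + 1, nT + 2) \}$. Since $T \geq 3$ for $L$ large, the time windows $(nT, nT + 2)$ use disjoint portions of the Harris graphical representation, so conditionally on the $\sigma$-field $\mathcal F = \sigma \,(\eta_{mT}^N : m \in I)$ the variables $\xi_n$ are independent; moreover, Lemma \ref{center} gives $P \,(\xi_n = 1 \,|\, \mathcal F) \geq 2 p_1$ on the $\mathcal F$-measurable event $\{(0, n) \ \hbox{is good} \}$. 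Writing $G = \{n \in I : (0, n) \ \hbox{is good} \}$, a standard Hoeffding--Chernoff bound applied conditionally on $\mathcal F$ yields
$$ P \,\bigl( \textstyle \sum_{n \in G} \xi_n < p_1 \,\card G \ \big| \ \mathcal F \bigr) \ \leq \ \exp (- c_2 \,\card G) $$
for some $c_2 > 0$, which on $\{\card G \geq c_1 \,\card I \}$ is at most $\exp (- c_2 c_1 L^{-2} \mathcal I_K)$.

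Combining the two tasks, outside a conditional event of probability $C \,\exp (- \gamma K) + \exp (- c_2 c_1 L^{-2} \mathcal I_K)$ we obtain
$$ \leb \,\{t \in (s, s + \mathcal I_K) : \eta_t^N (0) = 2 \} \ \geq \ \sum_{n \in G} \xi_n \ \geq \ p_1 c_1 \,\card I \ \geq \ c_3 L^{-2} \mathcal I_K, $$
which exceeds $K^{-1} \mathcal I_K$ once $K$ is large, giving the desired exponential tail in $K$. The main obstacle is extracting enough independence across time blocks for the concentration step; this is resolved by the length-$T$ separation of the blocks together with the fact that the indicator of $(0, n)$ being good depends only on the snapshot $\eta_{nT}^N$ and is therefore $\mathcal F$-measurable.
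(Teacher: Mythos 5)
Your proposal follows essentially the same route as the paper's proof: Proposition \ref{coupling_1} and the survival of the coupled percolation process give a positive density $p_2$ of good sites $(0,n)$ among the $\sim T^{-1}\mathcal I_K$ time blocks in the window, Lemma \ref{center} converts each good block into a unit interval of type-$2$ occupation at the origin with probability at least $2p_1$, and a concentration bound yields Lebesgue measure at least $p_1 p_2 T^{-1}\mathcal I_K \geq K^{-1}\mathcal I_K$ for $K$ large; the paper states exactly this chain of inequalities, leaving the concentration step implicit.

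One small technical point in your write-up deserves correction. You claim that, conditionally on $\mathcal F = \sigma(\eta_{mT}^N : m \in I)$, the variables $\xi_n$ are independent with $P(\xi_n = 1 \mid \mathcal F) \geq 2p_1$ on $\{(0,n) \hbox{ good}\}$. This is not quite right: the \emph{later} snapshots $\eta_{mT}^N$, $m > n$, are functions of the graphical representation on $(nT, nT+2)$, so conditioning on the full collection of snapshots can bias $\xi_n$, and Lemma \ref{center} only controls the conditional law of $\xi_n$ given the configuration at time $nT$ (equivalently, given the past). The standard repair is to work with the filtration $\mathcal G_n$ generated by the graphical representation up to time $nT$, observe that $\{(0,n) \hbox{ good}\}$ is $\mathcal G_n$-measurable, that $\xi_n$ is $\mathcal G_{n+1}$-measurable, and that $P(\xi_n = 1 \mid \mathcal G_n) \geq 2p_1$ on $\{(0,n) \hbox{ good}\}$, then apply a sequential (Azuma--Hoeffding type) concentration inequality; this also handles the fact that the set $G$ of good indices is random and not known in advance. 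With that substitution your argument is complete and matches the paper's.
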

\begin{proof}
 The condition $\tau_N \geq 3 \mathcal I_K$ together with Proposition \ref{coupling_1} implies the existence of an in-all-direction expanding
 region centered at 0 which contains a positive density of good sites.
 In particular, there exists $p_2 \in (0, 1)$ such that
 $$ \begin{array}{l}
  P_0 \,(\card \,\{n : n T \in (s, s + \mathcal I_K - 2) \ \hbox{and} \ (0, n) \vspace{4pt} \\
 \hspace{60pt} \hbox{is a good site} \} < p_2 T^{-1} \mathcal I_K \ | \ \tau_N \geq 3 \mathcal I_K) \ \leq \ C_{19} \,\exp (- \gamma_{19} K). \end{array} $$
 From the previous estimate and Lemma \ref{center} it follows that, for $K$ sufficiently large,
 $$ \begin{array}{l}
  P_0 \,(\leb \,\{t \in (s, s + \mathcal I_K) : \eta_t^N (0) = 2 \} < K^{-1} \mathcal I_K \ | \ \tau_N \geq 3 \mathcal I_K) \vspace{4pt} \\
   \hspace{20pt} \leq \ P_0 \,(\leb \,\{t \in (s, s + \mathcal I_K) : \eta_t^N (0) = 2 \} < p_1 p_2 T^{-1} \mathcal I_K \ | \ \tau_N \geq 3 \mathcal I_K) \vspace{4pt} \\
   \hspace{40pt} \leq \ P_0 \,(\card \,\{n : n T \in (s, s + \mathcal I_K - 2) \ \hbox{and} \ \eta_t^N (0) = 2 \ \hbox{for all} \ t \in (n T + 1, n T + 2) \} \vspace{4pt} \\
   \hspace{60pt} < p_1 p_2 T^{-1} \mathcal I_K \ | \ \tau_N \geq 3 \mathcal I_K) \ \leq \ C_{18} \,\exp (- \gamma_{18} K) \end{array} $$
 for suitable $C_{18} < \infty$ and $\gamma_{18} > 0$.
 This completes the proof.
\end{proof}


\section{Proof of Theorem \ref{survival}}
\label{Sec: survival}

\indent In the previous section, we proved that the two-scale multitype contact process restricted to a single patch viewed
 at the $L$-cube level dominates oriented percolation on $\mathcal G_K$.
 In this section, we rely on consequences of this result, namely Lemmas \ref{time} and \ref{occupation}, to prove that
 the process on the entire lattice viewed at the patch (or $N$-cube) level dominates, in a sense to be specified, oriented
 percolation on $\mathcal G$.
 This will prove in particular Theorem \ref{survival}.
 First of all, we consider the interacting particle system whose state at time $t$ is a function
 $\bar \eta_t : \Z^d \longrightarrow \{0, 1, 2 \}$, and whose evolution at vertex $x \in A_z = N z + A_0$ is described by
 the transition rates
 $$ \begin{array}{rcl}
    c_{0 \,\to \,1} (x, \bar \eta) & = & \displaystyle
         \beta_1 \ \sum_{x \sim y} \ \ind \{\bar \eta (y) = 1 \} \ + \ 2d \,B_1 \vspace{4pt} \\
    c_{0 \,\to \,2} (x, \bar \eta) & = & \displaystyle
         \beta_2 \ \sum_{x \sim y} \ \ind \{\bar \eta (y) = 2 \} \ +
         \ B_2 \ \sum_{x \leftrightarrow y} \ \ind \{\bar \eta (y) = 2 \}
         \ \prod_{w \in A_z} \ \ind \{\bar \eta (w) \neq 2 \} \vspace{4pt} \\
    c_{1 \,\to \,0} (x, \bar \eta) & = & c_{2 \,\to \,0} (x, \bar \eta) \ = \ 1. \end {array} $$
 The dynamics are the same as for the process $\eta_t$ except at the center of the patches in which first type 1 particles
 appear spontaneously at rate $2d B_1$ and second births of type 2 particles originated from adjacent patches are only
 allowed if the patch is void of 2's.
 Note that $2d B_1$ is the rate of spontaneous births of type 1 in the process introduced in Section \ref{Sec: multitype}
 but also an upper bound of the rate at which the center of a patch becomes occupied by a 1 originated from an adjacent
 patch in the two-scale multitype contact process.
 In particular, starting from the same initial configuration, the processes $\eta_t$ and $\bar \eta_t$ can be coupled
 in such a way that
 $$ \{x \in \Z^d : \bar \eta_t (x) = 2 \} \ \subset \ \{x \in \Z^d : \eta_t (x) = 2 \} $$
 so it suffices to prove Theorem \ref{survival} for the process $\bar \eta_t$.
 The process $\bar \eta_t$ viewed at the patch level will be coupled with the oriented percolation process
 on $\mathcal G$ via the following definition.

\begin{defin}
\label{2-stable}
 Let $\mathcal I_K$ as in Lemma \ref{time}.
 Then, site $(z, n) \in \mathcal G$ is said to be type 2 stable if
 $$ \leb \,\{t \in n \mathcal I_K + (0, \mathcal I_K) : \bar \eta_t (N z) = 2 \} \ \geq \ K^{-1} \mathcal I_K. $$
 The set of sites which are type 2 stable at level $n$ is denoted by
 $$ X_n \ = \ \{z \in \Z^d : (z, n) \in \mathcal G \ \hbox{and} \ (z, n) \ \hbox{is type 2 stable} \}. $$
\end{defin}

\noindent The following proposition can be seen as the analog of Proposition \ref{coupling_1}.
 While Proposition \ref{coupling_1} is concerned with the two-scale multitype contact process restricted to a single
 patch viewed at the intermediate mesoscopic scale, Proposition \ref{coupling_2} is concerned with the unrestricted
 process viewed at the upper scale.
 Theorem \ref{survival} is a straightforward consequence of Proposition \ref{coupling_2}.

\begin{propo}
\label{coupling_2}
 Let $\beta_2 > \beta_1 > \beta_c$.
 Then, for $K = K (\ep)$ sufficiently large, the processes can be constructed on the same probability space in such a way that
 $$ P \,(W_n \subset X_n \ \hbox{for all} \ n \geq 0 \ | \ W_0 = X_0) \ = \ 1. $$
\end{propo}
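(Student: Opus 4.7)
The plan is to apply the 1-dependent oriented percolation comparison of Durrett \cite{durrett_1995}: I need to verify the local estimate
\begin{equation*}
P \big((z_2, n + 1) \in X_{n + 1} \mid (z_1, n) \in X_n \big) \ \geq \ 1 - \ep
\end{equation*}
for every pair of $\mathcal G$-neighbors $(z_1, n), (z_2, n+1)$, and to check that the field $\{(z, n) \in X_n\}$ is 1-dependent. The latter is immediate because membership in $X_n$ is measurable with respect to the graphical representation in a bounded space-time box containing patch $A_z$ together with the centers $N z'$ of its mesoscopic neighbors, restricted to the window $I_n := n \mathcal I_K + (0, \mathcal I_K)$; sites of $\mathcal G$ at graph distance greater than $1$ use disjoint portions of the Poisson randomness.

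To establish the local estimate, assume $(z_1, n) \in X_n$, so that $N z_1$ is of type $2$ on a subset of $I_n$ of Lebesgue measure at least $K^{-1} \mathcal I_K$. During these times, mesoscopic $2$-arrows from $N z_1$ to $N z_2$ fire as a Poisson process of rate $B_2$; by the definition of $\bar \eta$, each such arrow fired while patch $A_{z_2}$ is void of $2$'s plants a lone $2$ at $N z_2$ (a ``seeding''), and once a seeding occurs at a time $s$ no further mesoscopic $2$-births into $A_{z_2}$ can happen until the patch is again $2$-empty. Consequently, the restriction of $\bar \eta$ to $A_{z_2}$ from time $s$ onwards coincides pathwise with the single-patch process $\eta^N$ of Section \ref{Sec: multitype} started with a lone $2$ at the center, to which Lemmas \ref{time} and \ref{occupation} apply directly.

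I would then chain the two lemmas as follows. By Lemma \ref{time} with a fixed large constant $t_0$, each seeding has probability at least a constant $p_3 > 0$, depending on $L$ but not on $K$, of triggering the long-survival event $\{\tau_N \geq 3 \mathcal I_K\}$; indeed step (i) in the proof of Lemma \ref{time} gives $P_0 (\tau_N \geq 3 \mathcal I_K) \geq P_0 (\Omega_K) - C_{12} e^{- \gamma_{12} K}$, and $P_0 (\Omega_K)$ is bounded below by a positive constant for $\beta_2 > \beta_c$ and $K$ large. Partitioning $I_n$ into $M \sim K^{-1} \mathcal I_K / t_0$ disjoint length-$t_0$ windows, a large-deviations bound shows that with probability $\geq 1 - \ep / 3$ a positive fraction of these windows each contain a fresh seeding of $A_{z_2}$, so the chance that \emph{none} of them triggers long survival is at most $(1 - p_3)^{\Theta (M)}$, doubly exponentially small in $K$. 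Conditionally on some seeding time $s^\star \in I_n$ producing a colony still alive throughout $I_{n+1}$, Lemma \ref{occupation} applied with $s = (n + 1) \mathcal I_K - s^\star$ gives the type-$2$-stability of $(z_2, n + 1)$ up to an additional error of at most $C_{18} e^{- \gamma_{18} K}$. Summing the three exceptional probabilities yields the required $1 - \ep$ bound once $K$ is large.

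The main obstacle I expect is the bookkeeping for the independence of consecutive seeding attempts: the time at which a new seeding becomes possible depends on when the previous failed seeding has died out, so attempts are not literally independent. I would handle this via the strong Markov property applied to the natural filtration of the graphical representation, using the fact that Lemma \ref{time} (with a moderately large $t$) guarantees that a failed seeding dies out within a stochastically bounded time that does not depend on $K$; this leaves ample room to fit the required $\Theta(M)$ fresh attempts into $I_n$.
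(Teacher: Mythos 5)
Your overall strategy coincides with the paper's: reduce to the one-step estimate, observe that while $(z_1, n)$ is type $2$ stable the source vertex $N z_1$ fires mesoscopic arrows at rate $B_2$ during an occupation time of at least $K^{-1} \mathcal I_K$, note that between a seeding and the subsequent extinction the restriction of $\bar \eta$ to $A_{z_2}$ evolves exactly as the single-patch process $\eta^N$, and conclude by combining the dichotomy of Lemma \ref{time} (each seeded colony either dies quickly or lives $3 \mathcal I_K$ units of time, each attempt succeeding with probability $p_3 = P_0 (\tau_N > 3 \mathcal I_K) > 0$) with Lemma \ref{occupation} applied at the seeding time. Two points in your execution need repair. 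First, an arrow landing while $A_{z_2}$ is void of $2$'s does \emph{not} automatically plant a $2$ at $N z_2$: the transition is $c_{0 \to 2}$, so the birth also requires $N z_2$ to be \emph{empty}, and that vertex receives spontaneous type $1$ births at rate $2 d B_1$, so it is occupied by a $1$ a positive fraction of the time. You must add the estimate (as the paper does via the $\hbox{Exponential} \,(B_2)$ variable and the bound that $N e_1$ is in state $0$ at least a fraction $K^{-1}$ of the time with high probability) that the overlap of ``source occupied by a $2$'' and ``target vertex empty'' still has measure of order $K^{-1} \mathcal I_K$; this only rescales the effective seeding rate.

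Second, your bookkeeping asks for $\Theta (M)$ fresh attempts with $M \sim K^{-1} \mathcal I_K / t_0$ exponentially large in $K$. This is both more than you need and potentially unprovable as stated: by Lemma \ref{time}, a failed colony outlives any fixed $t_0$ with probability roughly $C_{12} \exp (- \gamma_{12} K)$, and a union bound over $\exp (c K)$ windows does not close unless $\gamma_{12} > c$, which is not guaranteed. The paper instead notes that the number of attempts before the first success is geometric with parameter $p_3$, truncates it at a constant $m_{\ep}$ with $(1 - p_3)^{m_{\ep}}$ small, and then shows that $m_{\ep}$ failed attempts fit inside the occupation-time budget $K^{-1} \mathcal I_K$ because each failed colony's lifetime $\bar r_i$ and each inter-seeding waiting time $\bar s_i$ exceeds $\mathcal I_K / (2 m_{\ep} K)$ only with exponentially small probability. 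Restricting your scheme to a constant number of attempts, handled exactly by the strong Markov argument you sketch, removes the difficulty and brings your proof into line with the paper's.
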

\begin{proof}
 Since the evolution rules of the process are invariant by translation of vector $u \in N \Z^d$, it suffices to prove that
 $$ P \,((e_1, 1) \ \hbox{is type 2 stable} \ | \ (0, 0) \ \hbox{is type 2 stable}) \ \geq \ 1 - \ep $$
 for all $K$ sufficiently large.
 We assume that site $(0, 0)$ is type 2 stable and let
 $$ \sigma_{e_1} \ = \ \inf \,\bigg\{t \geq 0 : \prod_{w \in A_{e_1}}
    \ind \{\bar \eta_s (w) \neq 2 \} = 0 \ \hbox{for all} \ s \in t + (0, 3 \mathcal I_K) \bigg\}. $$
 In words, $\sigma_{e_1}$ is the first time a successful invasion occurs, where successful invasion means that a 2 originated
 from an adjacent patch is sent to $A_{e_1}$ and its family survives at least $3 \mathcal I_K$ units of time in the patch $A_{e_1}$.
 The aim is to prove that $P \,(\sigma_{e_1} > \mathcal I_K)$ is small for $K$ large.
 This, together with Lemma \ref{occupation}, will imply that site $(e_1, 1)$ is type 2 stable with probability arbitrarily close
 to 1 for large enough $K$.
 To estimate the random time $\sigma_{e_1}$ we let $s_0 = 0$ and define by induction
 $$ \begin{array}{l}
     r_i \ = \ \displaystyle \inf \,\bigg\{t \geq s_{i - 1} :
               \displaystyle \prod_{w \in A_{e_1}} \ind \{\bar \eta_t (w) \neq 2 \} = 0 \bigg\}
               \vspace{5pt} \\ \hspace{50pt} \hbox{and} \qquad
     s_i \ = \ \displaystyle \inf \,\bigg\{t \geq r_i :
               \displaystyle \prod_{w \in A_{e_1}} \ind \{\bar \eta_t (w) \neq 2 \} = 1 \bigg\}. \end{array} $$
 In words, $r_i$ is the $i$th time a type 2 originated from an adjacent patch is born at the center of patch $A_{e_1}$
 and $s_i$ the $i$th time patch $A_{e_1}$ becomes void of 2's.
 By letting
 $$ M \ = \ \inf \,\{i \geq 1 : s_i - r_i > 3 \mathcal I_K \}, $$
 we obtain $\sigma_{e_1} = r_M$.
 Let $\bar r_i = s_i - r_i$, and let $\bar s_i$ be the amount of time vertex 0 is occupied by a type 2 between time
 $s_{i - 1}$ and time $r_i$.
 Since vertex 0 is occupied by a 2 at least $K^{-1} \mathcal I_K$ units of time until $\mathcal I_K$ (recall that $(0, 0)$ is type 2 stable),
 on the event $\{r_m > \mathcal I_K \}$, we have
 $$ \sum_{i = 1}^m \ \bar r_i + \bar s_i \ \geq \ K^{-1} \mathcal I_K, $$
 which implies that
 $$ \{r_m > \mathcal I_K \} \ \subset \ \bigcup_{i = 1}^m \ \{\max (\bar r_i, \bar s_i) \geq \mathcal I_K / 2m K \}. $$
 Putting things together, we obtain
\begin{equation}
\label{eq_0}
 \begin{array}{rcl}
  P \,(\sigma_{e_1} > \mathcal I_K) & = &
    \displaystyle \sum_{m = 1}^{\infty} \ P \,(r_m > \mathcal I_K \ \hbox{and} \ M = m) \vspace{5pt} \\ & = &
    \displaystyle \sum_{m = 1}^{\infty} \ \sum_{i = 1}^m \
      P \left(\frac{\mathcal I_K}{2m K} < \bar r_i \leq 3 \mathcal I_K \ \hbox{and} \ M = m \right) \vspace{5pt} \\ & & \hspace{40pt} + \
    \displaystyle \sum_{m = 1}^{\infty} \ \sum_{i = 1}^m \
      P \left(\bar s_i > \frac{\mathcal I_K}{2m K} \ \hbox{and} \ M = m \right). \end{array}
\end{equation}
 We estimate the right-hand side of \eqref{eq_0} in three steps (see \eqref{eq_1}-\eqref{eq_3} below).
 First of all, observing that from time $r_i$ to time $s_i$ the process $\bar \eta_t$ restricted to $A_{e_1}$ evolves
 according to the transition rates of the process $\eta_t^N$ and applying the Markov property, we have
 $$ P \,(M = m \ | \ M \geq m) \ = \ P \,(M = 1) \ = \ P_0 \,(\tau_N > 3 \mathcal I_K) $$
 for all $m \geq 2$, which implies that
 $$ P \,(M = m) \ = \ (1 - p_3)^{m - 1} \,p_3 \quad \hbox{with} \quad p_3 \ = \ P_0 \,(\tau_N > 3 \mathcal I_K) \ > \ 0. $$
 In particular, there is a large $m_{\ep}$, fixed from now on, such that
\begin{equation}
\label{eq_1}
 \sum_{m = m_{\ep}}^{\infty} \ \sum_{i = 1}^m \ P \,(M = m) \ \leq \
 \sum_{m = m_{\ep}}^{\infty} \ m \,(1 - p_3)^{m - 1} \ \leq \ \frac{\ep}{5}.
\end{equation}
 In other respects, Lemma \ref{time} implies that
\begin{equation}
\label{eq_2}
 \begin{array}{l}
  \displaystyle \sum_{m = 1}^{m_{\ep}} \ \sum_{i = 1}^m \ P \left(\frac{\mathcal I_K}{2m K} < \bar r_i \leq 3 \mathcal I_K \right) \
  \leq \ \displaystyle m_{\ep}^2 \ P \left(\frac{\mathcal I_K}{2m_{\ep} K} < \bar r_1 \leq 3 \mathcal I_K \right) \vspace{5pt} \\
  \hspace{50pt} \leq \ \displaystyle m_{\ep}^2 \ P_0 \left(\frac{\mathcal I_K}{2m_{\ep} K} < \tau_N \leq 3 \mathcal I_K \right) \vspace{5pt} \\
  \hspace{100pt} \leq \ m_{\ep}^2 \
  \displaystyle \left[C_{12} \,\exp (- \gamma_{12} K) \ + \ C_{13} \,\exp \bigg(- \frac{\gamma_{13} \mathcal I_K}{2m_{\ep} K} \bigg) \right] \
  \leq \ \frac{\ep}{5} \end{array}
\end{equation}
 for $K$ sufficiently large.
 Finally, letting $X \sim \hbox{Exponential} \,(B_2)$ and using that $N e_1$ is in state 0 a fraction of time of less than $K^{-1}$ with
 probability less than $\ep / 5 m_{\ep}^2$ for $K$ large, we obtain
\begin{equation}
\label{eq_3}
 \begin{array}{l}
  \displaystyle \sum_{m = 1}^{m_{\ep}} \ \sum_{i = 1}^m \ P \left(\bar s_i > \frac{\mathcal I_K}{2m K} \right) \
  \leq \ \displaystyle m_{\ep}^2 \ P \left(\bar s_1 > \frac{\mathcal I_K}{2m_{\ep} K} \right) \vspace{5pt} \\
  \hspace{25pt} \leq \ \displaystyle \frac{\ep}{5} \ + \ m_{\ep}^2 \ P \,\left(X > \frac{\mathcal I_K}{2m_{\ep} K^2} \right) \
  \leq \ \displaystyle \frac{\ep}{5} \ + \ m_{\ep}^2 \ \exp \left(- \,\frac{B_2 \mathcal I_K}{2m_{\ep} K^2} \right) \ \leq \ \frac{2 \ep}{5} \end{array}
\end{equation}
 for $K$ large.
 Applying Lemma \ref{occupation} with $s = \mathcal I_K - \sigma_{e_1}$, we also have
\begin{equation}
\label{eq_4}
 \begin{array}{l}
  P \,(\leb \,\{t \in (\mathcal I_K, 2 \mathcal I_K) : \bar \eta_t (N e_1) = 2 \} < K^{-1} \mathcal I_K \ | \ \sigma_{e_1} \leq \mathcal I_K) \vspace{4pt} \\ \hspace{80pt} = \
  P \,((e_1, 1) \ \hbox{is not type 2 stable} \ | \ \sigma_{e_1} \leq \mathcal I_K) \ \leq \ \displaystyle \frac{\ep}{5} \end{array}
\end{equation}
 for $K$ sufficiently large.
 From \eqref{eq_1}-\eqref{eq_4}, we conclude that
 $$ \begin{array}{l}
     P \,((e_1, 1) \ \hbox{is not type 2 stable} \ | \ (0, 0) \ \hbox{is type 2 stable}) \vspace{4pt} \\ \hspace{40pt} \leq \
     P \,(\sigma_{e_1} > \mathcal I_K) \ + \ P \,((e_1, 1) \ \hbox{is not type 2 stable and} \ \sigma_{e_1} \leq \mathcal I_K) \ \leq \ \ep \end{array} $$
 for $K$ sufficiently large.
 This completes the proof.
\end{proof}


\section{Proof of Theorem \ref{coexistence}}
\label{Sec: coexistence}

\indent In this section, we prove that the parameter region in which type 1 and type 2 coexist for the two-scale multitype contact
 process has a positive Lebesgue measure, which contrasts with Neuhauser's conjecture about the multitype contact process
 on the regular lattice.
 The strategy of our proof is as follows.
 First of all, we establish coexistence for a particular point of the space of the parameters by comparing the process with
 1-dependent oriented percolation.
 Interestingly, survival of type 1 and survival of type 2 are proved by considering different time scales.
 In other words, the process will be simultaneously coupled with two different oriented percolation processes, one following the
 evolution of type 1 particles at a certain time scale, the other one following the evolution of type 2 particles at a slower
 time scale.
 The suitable time scale for type 2 is fixed afterward and depends on the time scale chosen for type 1.
 In both cases, however, the process is viewed at the same spatial scale, namely the upper mesoscopic scale (patch level).
 Since our proof relies on a block construction, standard perturbation arguments imply that the coexistence region can be
 extended to an open set containing the coexistence point, which proves Theorem \ref{coexistence}.
 In order to compare the process with oriented percolation, we introduce the following definition.

\begin{defin}
\label{1-stable}
 Let $\mathcal I_K$ as in Lemma \ref{time} and $\mathcal J > 0$.
 Then, site $(z, n) \in \mathcal G$ is said to be
\begin{enumerate}
 \item type 1 stable whenever vertex $N z$ is occupied by a 1 at time $n \mathcal J$. \vspace{4pt}
 \item type 2 stable whenever $\leb \,\{t \in n \mathcal I_K + (0, \mathcal I_K) : \eta_t (N z) = 2 \} \geq K^{-1} \mathcal I_K$.
\end{enumerate}
 For $i = 1, 2$, the set of type $i$ stable sites at level $n$ is denoted by
 $$ X_n^i \ = \ \{z \in \Z^d : (z, n) \in \mathcal G \ \hbox{and} \ (z, n) \ \hbox{is type $i$ stable} \}. $$
\end{defin}

\noindent Note that the definition of type 2 stable is slightly different from the one in Definition \ref{2-stable} in that
 it now applies to events related to the two-scale multitype contact process instead of the modified process introduced in
 Section \ref{Sec: survival}.
 However, Proposition \ref{coupling_2} still holds for $X_n^2$ since the set of 2's in the two-scale multitype contact process
 dominates the set of 2's in the modified process.
 To exhibit a point of the space of the parameters at which coexistence occurs, we fix
\begin{equation}
\label{eq: parameters}
 B_1 > 0 \qquad B_2 > 0 \qquad \beta_1 = 0 \qquad \beta_2 > \beta_c \qquad \delta_2 = 1.
\end{equation}
 The condition $\delta_2 = 1$ is to fix the time scale.
 The condition $\beta_1 = 0$ indicates that type 1 particles can only survive by jumping from patch to patch.
 In particular, to prove that they survive, the idea is to choose $\delta_1 > 0$ so small that a 1 at the center of a patch
 can produce and send its offspring to adjacent patches before being killed.
 More importantly, since $\beta_1 = 0$, survival of type 1 particles does not depend on the patch size.
 Coexistence is then obtained by choosing $N$ so large that type 2 particles can establish themselves an arbitrarily long
 time in a single patch.
 Since type 1 particles have a positive death rate, centers of patches are empty a positive fraction of time which allows
 type 2 particles to survive by invading adjacent patches from time to time.

\begin{propo}
\label{coupling_3}
 Assume \eqref{eq: parameters}.
 Then, for suitable $\mathcal J < \infty$, $\delta_1 > 0$ and $N < \infty$, the processes can be constructed on the same
 probability space in such a way that
 $$ P \,(W_n^1 \subset X_n^1 \ \hbox{and} \ W_n^2 \subset X_n^2 \ \hbox{for all} \ n \geq 0 \ |
      \ W_0^1 = X_0^1 \ \hbox{and} \ W_0^2 = X_0^2) \ = \ 1 $$
 where $W_n^1$ and $W_n^2$ are two copies of $W_n$.
\end{propo}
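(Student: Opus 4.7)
The plan is to establish the two couplings more or less separately by exploiting the fact that type 1 and type 2 live on vastly different time scales. For type 2, the work is already done by Proposition \ref{coupling_2}: that proposition is stated for the modified process $\bar \eta_t$, but since $\bar \eta_t$ is obtained from $\eta_t$ by only suppressing long-range type 2 births (so that $\{x : \bar \eta_t(x) = 2\} \subset \{x : \eta_t(x) = 2\}$), the same graphical construction yields $W_n^2 \subset X_n^2$ directly for $\eta_t$. Thus the substance of the proof lies in establishing the analogous containment $W_n^1 \subset X_n^1$ for type 1 at a suitable time scale $\mathcal{J}$ and for suitable $\delta_1$ and $N$.

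To produce the type 1 comparison, I would look at the sub-process $z \mapsto \eta_t(Nz)$ of the process restricted to the patch centers. Since $\beta_1 = 0$, a type 1 particle at a patch center can only replicate through the mesoscopic edges at total rate $2dB_1$, is killed at rate $\delta_1$, and is prevented from arriving at a center that is currently occupied. If one ignores type 2 altogether, this restricted process is exactly a basic contact process on $\mathbb Z^d$ with birth rate $B_1$ and death rate $\delta_1$. By choosing $\delta_1$ so small that $B_1 / \delta_1$ is much larger than the critical ratio of the contact process on $\mathbb Z^d$, the classical block construction of Bezuidenhout and Grimmett gives, for some time scale $\mathcal{J} = \mathcal{J}(\delta_1)$, a comparison with $1$-dependent oriented percolation on $\mathcal{G}$ with parameter at least $1 - \epsilon/2$.

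Next I would absorb the interference coming from type 2 into an extra $\epsilon/2$ error. Conditional on type 2 stability at each site visited by the comparison block construction, Lemma \ref{occupation} tells us that the Lebesgue measure of times during the interval of length $\mathcal{I}_K$ when the center of the patch is occupied by a 2 is at most $K^{-1} \mathcal{I}_K$, with failure probability decaying exponentially in $K$. Choosing $K$ large (which only enlarges $N = (K+2)L$ and does not affect the type 2 coupling), the total obstruction time during any block of length $\mathcal{J}$ in which the relevant centers are type 2 stable is at most $\mathcal{J}/K$. A standard thinning-of-Poisson-process argument then shows that removing from the graphical representation the Poisson arrows landing during these obstructed intervals changes the law of the type 1 contact process on the block by at most an $\epsilon/2$ quantity in variational distance. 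Together with the pure type 1 estimate this yields
\begin{equation*}
  P\bigl((z_2, n+1) \text{ is type 1 stable} \,\bigm|\, (z_1, n) \text{ is type 1 stable}\bigr) \ \geq \ 1 - \epsilon
\end{equation*}
for neighbors $z_1, z_2$, from which the coupling of $W_n^1$ with $X_n^1$ follows by the same standard percolation comparison argument invoked for Proposition \ref{coupling_1}.

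The main obstacle is the joint dependence: the two comparison processes $W^1$ and $W^2$ live on the same graphical representation and the type 1 trajectory at the center of a patch itself blocks type 2 from invading that center, while the type 2 trajectory blocks type 1. My proposal is to break this loop by fixing the parameters in a prescribed order — first $\delta_1$ small enough to make the pure type 1 contact process strongly supercritical on the mesoscopic lattice, then the oriented percolation scale $\mathcal{J}$, then $\beta_2 > \beta_c$, then $K$ (and hence $N$) large, with $\mathcal{I}_K \gg \mathcal{J}$. At that last step Lemma \ref{occupation} allows one to treat the type 2 occupation at centers as a sparse random environment for the type 1 dynamics, and conversely, as observed after Definition \ref{1-stable}, the upper bound on type 2 survival obtained via $\bar \eta_t$ is insensitive to the precise whereabouts of the 1's. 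This separation of rate dependencies is the reason the two stability events can be verified on the same probability space, and it is what the ``two different time scales'' heuristic stated in the introduction makes rigorous.
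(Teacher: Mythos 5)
Your high-level structure (prove the two one-step estimates separately, borrow Proposition \ref{coupling_2} for type 2, and build a block estimate for type 1 on the mesoscopic lattice) matches the paper, but the mechanism you use to handle the interference of type 2 with the spread of type 1 rests on a misreading of Lemma \ref{occupation}. That lemma gives a \emph{lower} bound on the time a patch center spends in state 2 (at least $K^{-1} \mathcal I_K$ with high probability); it is the ingredient certifying type 2 stability, and it says nothing about that occupation time being \emph{at most} $K^{-1} \mathcal I_K$. Consequently your claim that the total obstruction during a block of length $\mathcal J$ is at most $\mathcal J / K$, and hence that a thinning argument costs only $\ep/2$ in variational distance, has no support: the fraction of time a center is occupied by a 2 can be close to $1$ when $\beta_2$ is large. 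The paper handles this point more crudely and more robustly: from $c_{0 \to 2}(N e_1, \eta) \leq 2d(B_2 + \beta_2)$ and $c_{2 \to 0} = 1$ it extracts only that the 2-occupation fraction of the target center is at most $\Theta_2 = 2d(B_2+\beta_2)/(1 + 2d(B_2+\beta_2)) < 1$ up to a Poisson large-deviation error, leaving a fraction $1 - \Theta_2 - \ep_0$ of $(0, \mathcal J)$ during which a birth arrow from $0$, fired at rate $B_1$, succeeds; the failure probability $\exp(-B_1(1-\Theta_2-\ep_0)\mathcal J)$ is beaten by taking $\mathcal J$ large, and only afterwards is $\delta_1$ taken small so that the 1 at the source center survives the whole block with probability $\exp(-\delta_1 \mathcal J)$ close to $1$. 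This makes your appeal to Bezuidenhout--Grimmett unnecessary (the source particle is essentially immortal on the scale $\mathcal J$, so a direct one-step computation suffices), and it forces a different order of parameter choice than the one you propose: $\beta_2$ and $\mathcal J$ must be fixed before $\delta_1$, since the type 1 estimate depends on $\Theta_2$.

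A second, smaller gap: Proposition \ref{coupling_2} and the results of Section \ref{Sec: multitype} are proved under $\beta_2 > \beta_1 > \beta_c$ with unit death rates, whereas here $\beta_1 = 0 < \beta_c$ and $\delta_1$ is small. You cannot simply assert that ``the work is already done''; the paper devotes the final paragraph of its proof to checking that Lemma \ref{invasion}, Proposition \ref{coupling_1}, and Lemmas \ref{time}--\ref{occupation} survive under \eqref{eq: assumptions} (since $\beta_1 = 0$ all off-center 1's in a patch die within time $\sqrt L$ with high probability, and since $\delta_1 > 0$ the center is empty a positive fraction of time so 2's can still invade it). That verification is needed before the type 2 half of your coupling is legitimate.
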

\begin{proof}
 As in Proposition \ref{coupling_2}, it suffices to prove that, for $i = 1, 2$,
\begin{equation}
\label{eq: invade}
  P \,((e_1, 1) \ \hbox{is type $i$ stable} \ | \ (0, 0) \ \hbox{is type $i$ stable}) \ \geq \ 1 - \ep
\end{equation}
 for a suitable choice of the parameters.
 The first step is to show that there is enough room for type 1 particles to invade the center of the patch $A_{e_1}$.
 By observing that
 $$ c_{0 \,\to \,2} (N e_1, \eta) \ \leq \ 2d \,(B_2 + \beta_2) \quad \hbox{and} \quad
    c_{2 \,\to \,0} (N e_1, \eta) \ = \ \delta_2 \ = \ 1 $$
 we obtain
 $$ \limsup_{s \to \infty} \ \E \,[s^{-1} \,\leb \,\{t \in (0, s) : \eta_t (N e_1) = 2 \}] \ \leq \
    \Theta_2 \ := \ \frac{2d \,(B_2 + \beta_2)}{1 + 2d \,(B_2 + \beta_2)} \ < \ 1. $$
 Let $\ep_0 \in (0, 1 - \Theta_2)$.
 Large deviation estimates for the Poisson distribution give
 $$ P \,(\leb \,\{t \in (0, \mathcal J) : \eta_t (N e_1) = 2 \} > (\Theta_2 + \ep_0) \mathcal J) \ \leq \ C_{20} \,\exp (- \gamma_{20} \mathcal J) $$
 for suitable $C_{20} < \infty$ and $\gamma_{20} > 0$.
 In particular,
 $$ \begin{array}{l}
  P \,(\eta_t (N e_1) \neq 1 \ \hbox{for all} \ t \in (0, \mathcal J) \ \hbox{and} \ \eta_t (0) = 1 \ \hbox{for all} \ t \in (0, \mathcal J)) \vspace{4pt} \\ \hspace{20pt} \leq \
  P \,(\leb \,\{t \in (0, \mathcal J) : \eta_t (N e_1) = 2 \} > (\Theta_2 + \ep_0) \mathcal J) \vspace{4pt} \\ \hspace{40pt} + \
  P \,(\eta_t (N e_1) \neq 1 \ \hbox{for all} \ t \in (0, \mathcal J) \ \hbox{and} \ \eta_t (0) = 1 \ \hbox{for all} \ t \in (0, \mathcal J) \ \hbox{and} \vspace{4pt} \\ \hspace{60pt}
    \ \leb \,\{t \in (0, \mathcal J) : \eta_t (N e_1) = 2 \} \leq (\Theta_2 + \ep_0) \mathcal J) \vspace{4pt} \\ \hspace{20pt}
    \leq \ C_{20} \,\exp (- \gamma_{20} \mathcal J) \ + \ \exp (- B_1 (1 - \Theta_2 - \ep_0) \mathcal J). \end{array} $$
 Finally, taking $\mathcal J$ large and then $\delta_1 > 0$ small, we get
 $$ \begin{array}{l}
  P \,((e_1, 1) \ \hbox{is not type 1 stable} \ | \ (0, 0) \ \hbox{is type 1 stable}) \ = \
  P \,(\eta_{\mathcal J} (N e_1) \neq 1 \ | \ \eta_0 (0) = 1) \vspace{4pt} \\ \hspace{20pt} \leq \
  P \,(\eta_{\mathcal J} (N e_1) \neq 1 \ \hbox{and} \ \eta_t (N e_1) = 1 \ \hbox{for some} \ t \in (0, \mathcal J) \ | \ \eta_0 (0) = 1) \vspace{4pt} \\ \hspace{40pt} + \
  P \,(\eta_t (N e_1) \neq 1 \ \hbox{for all} \ t \in (0, \mathcal J) \ \hbox{and} \ \eta_t (0) = 1 \ \hbox{for all} \ t \in (0, \mathcal J) \ | \ \eta_0 (0) = 1) \vspace{4pt} \\ \hspace{60pt} + \
  P \,(\eta_t (0) \neq 1 \ \hbox{for some} \ t \in (0, \mathcal J) \ | \ \eta_0 (0) = 1)  \vspace{4pt} \\ \hspace{20pt} \leq \
  C_{20} \,\exp (- \gamma_{20} \mathcal J) \ + \ \exp (- B_1 (1 - \Theta_2 - \ep_0) \mathcal J) \ + \ 2 \,(1 - \exp (- \delta_1 \mathcal J)) \ \leq \ \ep \end{array} $$
 which establishes \eqref{eq: invade} for $i = 1$.
 Moreover, survival of type 1 particles holds regardless of the patch size so the proof that conditions \eqref{eq: invade}
 for $i = 1$ and $i = 2$ hold \emph{simultaneously} for the same parameters follows by taking $L$ and $K$ sufficiently large, and
 applying the results of the previous two sections.
 This proves that both types coexist.
 To conclude, we briefly justify the fact that the results of the previous sections hold as well under the new assumptions
\begin{equation}
\label{eq: assumptions}
  \beta_1 \ = \ 0 \qquad \hbox{and} \qquad \delta_1 \ > \ 0.
\end{equation}
 First, the condition $\beta_1 = 0$ implies that, starting from any initial configuration,
 $$ P \,(\eta_t^N (x) = 1 \ \hbox{for some} \ x \in B_z \setminus \{0 \} \ \hbox{and} \ t \geq \sqrt L) \ \leq \ L^d \,\exp (- \delta_1 \sqrt L)$$
 which, now that $\delta_1 > 0$ is fixed, can be made arbitrarily small by taking $L$ large.
 In words, except at the center of the patch, all the 1's in $B_z \subset A_0$ are rapidly killed, so the proof of Lemma \ref{invasion}
 extends easily under \eqref{eq: assumptions}.
 Proposition \ref{coupling_1} and Lemma \ref{time} follow as well.
 Now, we observe that the condition $\delta_1 > 0$ implies that vertex 0 is empty a positive fraction of time, which allows for
 invasions of particles of type 2 at the center of the patch.
 With this in mind, one can check easily that the proofs of Lemmas \ref{center} and \ref{occupation} also apply under the
 assumptions \eqref{eq: assumptions}.
 Note however that the new lower bound $p_1 > 0$ in Lemma \ref{center} might be smaller.
 Finally, the proof of Proposition \ref{coupling_2} still holds as a consequence of Lemmas \ref{time} and \ref{occupation}.
\end{proof}


\end{document}